\newcommand\join\vee
\newcommand\meet\wedge
    \numberwithin{equation}{section}
    \newcommand{\crefdefpart}[2]{%
        \hyperref[#2]{\namecref{#1}~\labelcref*{#1}~\ref*{#2}}}
    \let\etoolboxforlistloop\forlistloop 
    \let\forlistloop\etoolboxforlistloop 
    \newcommand{\blx@noerroretextools}{}
    \newcommand{\Z}{\mathbb{Z}}
    \newcommand{\R}{\mathbb{R}}
    \newcommand{\C}{\mathbb{C}}
    \def\sph^#1{\mathbb S^{#1}}
    \newcommand\p[1]{\left(#1\right)}
    \newcommand\abs[1]{\left|#1\right|}
    \renewcommand\b[1]{\left[#1\right]}
    \newcommand\jp[1]{\left\langle#1\right\rangle}
    \newcommand{\spn}{\text{span}}
    \newcommand\eps\varepsilon
    \newcommand\pji\varphi
    \newcommand\sig\varsigma
    \newcommand\beq{\begin{equation}}
    \newcommand\eeq{\end{equation}}
    \newcommand\inv{^{-1}{}}
    \def\XXint#1#2#3{{\setbox0=\hbox{$#1{#2#3}{\int}$ }
    \vcenter{\hbox{$#2#3$ }}\kern-.6\wd0}}
    \newtheoremstyle{thmst}
        {5pt}
        {5pt}
        {}
        {}
        {\bfseries}
        {.}
        {.5em}
        {}%
    \newtheoremstyle{rmkst}
        {5pt}
        {5pt}
        {}
        {}
        {\bfseries}
        {.}
        {.5em}
        {}%
    \newtheoremstyle{lmst}
        {5pt}
        {5pt}
        {}
        {}
        {\bfseries}
        {.}
        {5pt plus 1pt minus 1pt}
        {}%
    \newtheoremstyle{prpst}
        {5pt}
        {5pt}
        {}
        {}
        {\bfseries}
        {.}
        {.5em}
        {}%
    \newtheoremstyle{defst}
        {5pt}
        {5pt}
        {}
        {}
        {\bfseries}
        {.}
        {.5em}
        {}%
    \newtheoremstyle{wlst}
        {5pt}
        {5pt}
        {}
        {}
        {\bfseries}
        {.}
        {.5em}
        {}%
    \newtheoremstyle{asmpst}
        {5pt}
        {5pt}
        {}
        {}
        {\bfseries}
        {.}
        {.5em}
        {}%
    \theoremstyle{thmst}
        \newtheorem{theorem}{Theorem}[section]
        \newtheorem{conjecture}[theorem]{Conjecture}
    \theoremstyle{rmkst}
        \newtheorem{remark}[theorem]{Remark}
    \theoremstyle{defst}
    \theoremstyle{defst}
    \theoremstyle{lmst}
        \newtheorem{lemma}[theorem]{Lemma}
    \theoremstyle{prpst}
        \newtheorem{proposition}[theorem]{Proposition}
    \theoremstyle{wlst}
    \theoremstyle{asmpst}
    \theoremstyle{thmst}
        \newtheorem{corollary}[theorem]{Corollary}
        \newlist{defenum}{enumerate}{1} 
        \setlist[defenum]{label=(\roman*),ref=\thedefinition\,(\roman*)}
        \crefname{defenumi}{Definition}{Definitions}
        \newlist{lemenum}{enumerate}{1} 
        \setlist[lemenum]{label=(\roman*),ref=\thelemma\,(\roman*)}
        \crefname{lemenumi}{Lemma}{Lemmas}
        \newlist{corenum}{enumerate}{1} 
        \setlist[corenum]{label=(\roman*),ref=\thecorollary\,(\roman*)}
        \crefname{corenumi}{Corollary}{Corollaries}
        \newlist{thmenum}{enumerate}{1} 
        \setlist[thmenum]{label=(\roman*),ref=\thetheorem\,(\roman*)}
        \crefname{thmenumi}{Theorem}{Theorems}
        \newlist{rmkenum}{enumerate}{1} 
        \setlist[rmkenum]{label=(\roman*),ref=\theremark\,(\roman*)}
        \crefname{rmkenumi}{Remark}{Remarks}
        \newlist{prpenum}{enumerate}{1} 
        \setlist[prpenum]{label=(\roman*),ref=\theproposition\,(\roman*)}
        \crefname{prpenumi}{Proposition}{Propositions}
        \newlist{axenum}{enumerate}{1} 
        \setlist[axenum]{label=(\roman*),ref=\theaxiom\,(\roman*)}
        \crefname{axenumi}{Axiom}{Axioms}
        \newlist{defcrit}{enumerate}{1} 
        \setlist[defcrit]{label=(\roman*),ref=\thedefinition\,(\roman*)}
        \crefname{defcriti}{Definition}{Definitions}
        \newlist{lemcrit}{enumerate}{1} 
        \setlist[lemcrit]{label=(\alph*),ref=\thelemma\,(\alph*)}
        \crefname{lemcriti}{Lemma}{Lemmas}
        \newlist{thmcrit}{enumerate}{1} 
        \setlist[thmcrit]{label=(\alph*),ref=\thetheorem\,(\alph*)}
        \crefname{thmcriti}{theorem}{theorems}
        \newlist{rmkcrit}{enumerate}{1} 
        \setlist[rmkcrit]{label=(\alph*),ref=\theremark\,(\alph*)}
        \crefname{rmkcriti}{remark}{remarks}
        \newlist{prpcrit}{enumerate}{1} 
        \setlist[prpcrit]{label=(\alph*),ref=\theproposition\,(\alph*)}
        \crefname{prpcriti}{proposition}{propositions}
    \newcommand\ext{\mathcal E}
    \newcommand\dd[2]{\frac{\text d}{\text d #1}\left[#2\right]}
        \let\ifnc\@ifnextchar
            \def\redit {\marginpar{\raggedleft{See edit $\implies$}}\color{red}}
            \def\rs#1.{\redit #1.\color{black}}
            \def\rsm#1.{{\color{red} #1.}}
            \def\rend{\color{black}}
            \def\bedit {\marginpar{\raggedleft{See edit $\implies$}}\color{blue}}
            \def\bs#1.{\bedit #1.\color{black}}
            \def\bsm#1.{{\color{blue} #1.}}
            \def\ind#1_#2{\left\{#1_{#2}\right\}}
            \def\<#1>{\jp{#1}}
            \def\-#1/{{}_{#1}}
                \newcommand\pl@write[3]{%
                    \left\|#1\right\|_{L^{#2}#3}%
                    }
                \def\pl #1__#2{%
                    \def\pl@arg@i{#1}%
                    \def\pl@arg@ii{#2}%
                    \def\pl@arg@iii{\alpha}%
                    \futurelet\next\pl@eval%
                    }
                \def\pl@eval{%
                    \ifx\next\bgroup%
                            \expandafter\pl@eval@iii%
                        \else%
                            \expandafter\pl@eval@ii%
                        \fi%
                    }
                \def\pl@eval@iii#1{%
                    \pl@write\pl@arg@i\pl@arg@ii{\p{#1}}%
                    }
                \def\pl@eval@ii{%
                    \pl@write\pl@arg@i\pl@arg@ii{}%
                    }
                \newcommand\ef@write[3]{%
                    ^{#1\frac{#2}{#3}}{}%
                    }
                \newcommand\Ef@write[3]{%
                    ^{#1#2/#3}{}%
                    }
                \def\ef #1{%
                    \def\ef@arg@i{}%
                    \def\ef@arg@ii{#1}%
                    \futurelet\next\ef@eval%
                    }
                \def\ief #1{%
                    \def\ef@arg@i{-}%
                    \def\ef@arg@ii{#1}%
                    \futurelet\next\ef@eval%
                    }
                \def\ef@eval{%
                    \ifx\next/%
                            \expandafter\ef@eval@iii%
                        \else%
                            \expandafter\ef@eval@ii%
                        \fi%
                    }
                \def\ef@eval@iii/{%
                    \expandafter\ef@eval@v%
                    }
                \def\ef@eval@v#1{%
                    \Ef@write\ef@arg@i\ef@arg@ii{#1}}
                \def\ef@eval@ii{%
                    \expandafter\ef@eval@iv%
                    }
                \def\ef@eval@iv#1{%
                    \ef@write\ef@arg@i\ef@arg@ii{#1}%
                    }
            \def\e@writep #1{%
                ^{+#1}{}%
                }
            \def\e@writen #1{%
                ^{-#1}{}%
                }
            \def\e #1{%
                \ifx#1-%
                        \expandafter\e@writen%
                    \else%
                        \ifx#1+%
                                \expandafter\e@writep%
                            \else%
                                ^{#1}{}%
                            \fi%
                    \fi%
                }
            \def\e@2 {%
                }
            \newcommand\ie[1]{^{-#1}{}}
            \newcommand\eo[1]{^{#1-1}{}}
            \newcommand\ieo[1]{^{-#1+1}{}}
            \newcommand{\undersim}[1]{\mathrel{\mathpalette\@undersim{#1}}}
            \newcommand{\@undersim}[2]{%
              \vcenter{%
                \ialign{%
                  ##\cr
                  $\m@th#1#2$\cr
                  \noalign{\nointerlineskip\kern.2ex}
                  $\m@th#1\sim$\cr
                  \noalign{\kern-.4ex}
                }%
              }%
            }
            \def\subsetsim{\undersim{\subset}}
\title{Power loss for the {Mizohata-Takeuchi} Conjecture on {$C^k$} convex hypersurfaces}
\author{Hannah Cairo and Ruixiang Zhang}
\date{December, 2025}
\def\bb{\mathfrak b}
\def\dd{\phi}
\begin{document}
\begin{abstract}
     We find a family of compact $C^k$ hypersurfaces where the local Mizohata-Takeuchi Conjecture fails with a power loss of $R^{\frac{n-1}{n-1+k}-\eps}$. Moreover, this family is dense in the $C^k$ topology, and so the local Mizohata-Takeuchi conjecture fails for many convex hypersurfaces. In particular, the local Mizohata-Takeuchi Conjecture fails with a power loss of $R^{\frac{n-1}{n+1}-\eps}$ for many $C^2$ convex hypersurfaces. This power  matches the best known upper bound in \cite{ciw-mt-24} up to the endpoint. For the proof, our weight is positive definite like in \cite{cairo-counterexample-25}, and our construction is based on a projection of a higher rank lattice. As a by-product, we also construct compact convex $C^2$ hypersurfaces whose rescaling contains many lattice points in any dimension. 
\end{abstract}
\maketitle
\section{Introduction}\label{introsec}

\subsection{Main results}

This paper concerns the Mizohata-Takeuchi Conjecture, a weighted $L^2$ estimate for free Schr\"{o}dinger solutions and alike:

\begin{conjecture}[Mizohata-Takeuchi]\label{originalMT}
        For any compact $C^2$ hypersurface $\Sigma$ with hypersurface measure $d\sigma$, if we define the \emph{Fourier extension operator} $\ext_{\Sigma}: L^1 (\Sigma, d\sigma) \to L_{loc}^1 (\R^n)$ to be \[\ext_{\Sigma} f (x) = (fd\sigma)^{\check{}}(x),\] then for every non-negative weight $w(x)$ on $\R^n$,     
        \[\pl\ext_{\Sigma} f__2{\R^n;w(x)dx)}^2\lesssim \|\mathcal M w\|_{L^{\infty} (S^{n-1})} \cdot \pl f__2{\Sigma}^2,
            \label{original-mt-inequality}\]
            where $\mathcal M$ denotes the Kakeya-type maximal operator
    \[\mathcal M f(\nu):=\sup_{\text{unit tube }T\|\nu}\int_Tf,\text{ for } \nu \in S^{n-1}.
        \label{kakeya-type-maximal}\]
\end{conjecture}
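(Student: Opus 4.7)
The plan is to attempt \cref{originalMT} via a wave packet decomposition of $\ext_\Sigma f$ coupled with a multilinear Kakeya input and induction on scales. By a standard $\eps$-removal it suffices to prove the local version
\[
\int_{B_R}|\ext_\Sigma f|^2\,w\,dx\;\lesssim\;\|\mathcal M w\|_{L^\infty(S^{n-1})}\|f\|_{L^2(\Sigma)}^2
\]
uniformly in $R\geq 1$. Decompose $\Sigma$ into $R^{-1/2}$-caps $\tau$, write $f=\sum_\tau f_\tau$, and expand $\ext_\Sigma f=\sum_T a_T\phi_T$ into wave packets, each $\phi_T$ adapted to a tube $T$ of dimensions $R^{1/2}\times\cdots\times R^{1/2}\times R$ along the normal $\nu(\tau)$ of its associated cap, with $\sum_T|a_T|^2\lesssim\|f\|_{L^2(\Sigma)}^2$.

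Expanding $|\ext_\Sigma f|^2=\sum_{T,T'}a_T\bar a_{T'}\phi_T\overline{\phi_{T'}}$, the diagonal contribution is controlled immediately by the Kakeya maximal function: each $|\phi_T|^2$ is essentially the normalized indicator of $T$, and slicing $T$ into unit sub-tubes parallel to $\nu(\tau)$ gives $\int w\,|\phi_T|^2 \lesssim \|\mathcal M w\|_{L^\infty(S^{n-1})}$, hence
\[
\sum_T |a_T|^2 \int w\,|\phi_T|^2 \;\lesssim\; \|\mathcal M w\|_{L^\infty(S^{n-1})}\,\|f\|_{L^2(\Sigma)}^2.
\]
The entire difficulty thus lies in the off-diagonal terms.

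For these I propose a broad/narrow dichotomy. In the broad case, $T$ and $T'$ point in quantitatively transverse directions and the bilinear interactions $\int w\,\phi_T\overline{\phi_{T'}}$ are absorbed using the multilinear Kakeya inequality of Bennett-Carbery-Tao, summed over transverse $k$-tuples and recombined with Cauchy-Schwarz. In the narrow case, where many tubes cluster within a small angular window, one regroups them into fatter tubes at a coarser scale $R^{1-\delta}$, restricts $\Sigma$ to the corresponding slab, and applies the inductive hypothesis on each cluster; a geometric series in $\delta$ then returns the original bound.

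The crux is the narrow case, whose behavior is dictated by how many caps of $\Sigma$ can share nearly parallel normals. For strongly curved $\Sigma$ a Taylor expansion of the Gauss map caps this cluster count by $O(1)$ within each angular window of suitable size, and the induction closes with the conjectured constant. If, however, the regularity of $\Sigma$ is only $C^k$ for $k$ small, the normals may concentrate on lower-dimensional algebraic subvarieties and the cluster count can grow polynomially in $R$, causing the narrow case to absorb too much mass. Establishing a sufficient clustering bound on every compact $C^2$ (or more generally $C^k$) convex hypersurface, and verifying that it cannot be saturated by an arithmetic construction on $\Sigma$ adapted to $w$, is the single obstruction on which the entire scheme succeeds or fails.
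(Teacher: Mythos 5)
The statement you are trying to prove is a \emph{conjecture}, not a theorem, and the paper in question does not prove it --- rather, the paper (together with \cite{cairo-counterexample-25}) \emph{disproves} it. The first author showed in \cite{cairo-counterexample-25} that \cref{originalMT} fails, with at least a $\log R$ loss, for every compact $C^2$ hypersurface that is not contained in a hyperplane. The present paper sharpens this to a power loss for a $C^k$-dense family of hypersurfaces: \cref{powerblowupthm} produces, for any compact $C^k$ hypersurface $\Sigma$, an arbitrarily small $C^k$ perturbation $\Sigma'$, together with weights $w$ supported in $B_R$ and data $f$, for which the ratio of the two sides in \cref{original-mt-inequality} is at least $R^{\frac{n-1}{n-1+k}-\eps}$. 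In particular for $k=2$ this is $R^{\frac{n-1}{n+1}-\eps}$. There is therefore no correct argument to be had here; any proof you write down of \cref{originalMT} as stated must contain a fatal error.

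Your specific approach is also already known to be insufficient in a strong sense. Guth \cite{guth-barrier-22}, with details worked out in \cite{ciw-mt-24}, proved that any argument relying only on the standard wave packet axioms (tubes of dimensions $R^{1/2}\times\cdots\times R^{1/2}\times R$, locally constant property, orthogonality, broad/narrow splitting and induction on scales --- precisely the toolkit you invoke) cannot close the induction without a loss of at least $R^{\frac{n-1}{n+1}}$ for strictly convex $C^2$ hypersurfaces. This is the barrier your ``narrow case'' runs into: you correctly identify that the scheme ``succeeds or fails'' on a clustering bound, but no such bound exists, because the inequality is false. In fact, the matching lower bound of \cref{powerblowupthm} (for $k=2$) shows the Guth barrier is sharp, up to endpoints, for generic strictly convex $C^2$ hypersurfaces.

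Two further remarks. First, regularity in this problem behaves the opposite of your intuition: stronger curvature (e.g.\ strict convexity) does not rescue the conjecture, whereas higher smoothness $C^k$ with larger $k$ reduces the achievable power loss $R^{\frac{n-1}{n-1+k}-\eps}\to R^{0}$. Second, your proposed diagonal estimate $\int w\,|\phi_T|^2 \lesssim \|\mathcal M w\|_{L^\infty}$ already needs care: covering a $R^{1/2}$-tube by unit tubes costs a factor $R^{(n-1)/2}$, and whether this is compensated depends on the normalization of $\phi_T$ and the definition of a ``unit tube'' in \cref{kakeya-type-maximal}; as written the inequality is not self-evidently uniform. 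But this is a secondary concern: the essential gap is that you are attempting to prove a false statement, and the precise failure of your inductive scheme is what \cite{guth-barrier-22,ciw-mt-24} makes rigorous and what \cref{powerblowupthm} shows is unavoidable.
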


In this paper, we will often fix $\Sigma$ and abbreviate $\ext_{\Sigma}$ and $L^2 (\Sigma, d\sigma)$ as $\ext$ and $L^2 (\Sigma)$. The right-hand side of \cref{original-mt-inequality} is the first ``natural'' guess to bound the left-hand side, since it is familiar that $\ext f$ can be decomposed into wave packets that propagates along straight lines. This conjecture originated in the study of dispersive PDE with potentials \cite{takeuchi-necessary-conditions-74, takeuchi-cauchy-80, mizohata-cauchy-85} by Takeuchi and Mizohata. The name ``Mizohata-Takeuchi'' was popularized e.g. by the work \cite{barcelo1997weighted}.

\cref{originalMT} is a typical problem in Fourier restriction theory, where one is interested in the quantitative behavior of $\ext f$. On the other hand, it is also quite unusual. Indeed the role of curvature is not so clear in this problem: It follows from Plancherel that \cref{originalMT} holds when $\Sigma$ is a hyperplane, and there one only needs to restrict $T$'s on the right-hand side to those ones orthogonal to $\Sigma$.  For this reason, sometimes this conjecture is formulated in other slightly different but  ``natural'' ways, for example only for convex $C^2$ surfaces (where one has more tools in Fourier restriction theory and it may seem the conjecture is more likely to hold), or only for $T$'s parallel to some normal vector of $\Sigma$, see e.g. Conjecture 1.9 in \cite{carbery2023disentanglement}. For more history, variants and connections of this conjecture, see for example the talk \cite{Carbery-survey-19} or the recent thesis \cite{ferrante2024different}. 

Even in dimension $2$, \cref{originalMT} is out of reach by classical methods. This is perhaps for deeper reasons -- Next we discuss two more surprising aspects of the conjecture revealed by recent studies:


First, in the talk \cite{guth-barrier-22}, Guth explained that the usual ``wave packet decomposition axioms'', which have been very useful in proving estimates like decoupling or refined Strichartz, fail to prove \cref{originalMT}: There must be a ``power-loss'' when one tries to prove the conjecture only using these axioms. i.e. one loses a quantity of the form $R^C$ on the right-hand side of \cref{original-mt-inequality}, if $w$ is assumed to have support in $B_R$ (an $R$-ball) with $R>1$. Later \cite{ciw-mt-24} fleshed out the details in Guth's outline and determined there needs to be a power loss of $R^{\frac{n-1}{n+1}}$ in dimension $n$ ($n=2$ case was covered in \cite{guth-barrier-22}). A further generalization of this phenomenon to an MT-like conjecture for moment curves and alike was recently done in \cite{clpy-curves-25}.

Second, the first author proved in \cite{cairo-counterexample-25} that \cref{originalMT} fails for \emph{every} hypersurface that is not part of a hyperplane, and examples of $w$ exist where supp$w \subset B_R$ and \cref{original-mt-inequality} fails by a $\log R$-loss. The construction of $w$ is a positive definite function which reduces \cref{original-mt-inequality} to a combinatorial estimate. The counterexample is then constructed by using an embedded $c\log R$-dimensional hypercube. 

In light the above, it became particularly interesting to ask whether \cref{originalMT} can have power loss or not. This is often known as the \emph{Local} Mizohata-Takeuchi Conjecture:

\begin{conjecture}[Local Mizohata-Takeuchi]\label{local-mt}
    For any $C^2$ hypersurface $\Sigma$, the estimate \[\pl\ext_{\Sigma} f__2{B_R;w(x)dx)}^2\lesssim R^\alpha\|\mathcal M w\|_{L^{\infty} (S^{n-1})}\cdot \pl f__2{\Sigma}^2
            \label{eqn-mt-power-loss}\] holds  with arbitrarily small $\alpha>0$.
\end{conjecture}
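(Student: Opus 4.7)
The statement is a conjecture that the paper \emph{refutes}, so what follows is a plan for constructing families of counterexamples achieving the power loss claimed in the abstract. Following \cite{cairo-counterexample-25}, take a non-negative weight $w$ that is additionally \emph{positive definite}, i.e.\ $\widehat w=\mu$ for some non-negative measure $\mu$ on $\R^n$. Expanding via Plancherel,
\[
\int |\ext f|^2 w\,dx = \int_\Sigma\int_\Sigma f(\xi)\overline{f(\eta)}\,d\mu(\eta-\xi)\,d\sigma(\xi)\,d\sigma(\eta),
\]
so if $\mu$ is atomic on a prescribed difference set $\Lambda\subset\R^n$ then the left side is a weighted count of pairs $(\xi,\eta)\in\Sigma\times\Sigma$ with $\eta-\xi\in\Lambda$. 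Concentrating $f$ on a thickening of $\Sigma\cap R^{-1}\Z^n$ reduces \cref{eqn-mt-power-loss} to three quantitative questions: how many lattice points can lie on $R\Sigma$, how much additive structure do they carry, and how large can $\|\mathcal M w\|_{L^\infty(S^{n-1})}$ become?

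The heart of the construction is a $C^k$ convex hypersurface $\Sigma$ whose rescaling $R\Sigma$ contains as many lattice points as the $C^k$ regularity allows, together with $\Lambda$ rich in additive structure yet spread out enough to tame the Kakeya maximal. The key idea flagged in the abstract is to start with a lattice $L\subset\R^N$ of rank $N>n$ and project it generically down to $\R^n$: the projected set $\pi(L)$ inherits the additive structure of $L$ (many prescribed differences land in a small $\Lambda$), but is generic enough that every unit tube captures only $O(R^\eps)$ of the relevant points, so that $\|\mathcal M w\|_\infty=R^{o(1)}$. One then interpolates a smooth strictly convex model through a maximal subfamily of $\pi(L)\cap B_R$ via a $C^k$-controlled perturbation, and a Schmidt--Bombieri--Pila-style count shows the resulting surface contains the desired number of lattice points on its $R$-rescaling.

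Balancing the resulting left-hand side (driven by the pair-count) against $\|f\|_{L^2}^2\cdot\|\mathcal M w\|_\infty$ yields the advertised power loss $R^{(n-1)/(n-1+k)-\eps}$, the exponent emerging precisely from the density-versus-regularity tradeoff on $C^k$ convex hypersurfaces. The \textbf{main obstacle} is the surface construction: one must simultaneously control the $C^k$ norm, maintain strict positive curvature, and pass through (or closely approximate) enough projected lattice points on the rescaling. Quantifying this perturbation argument uniformly in $R$ with sharp constants is the delicate geometric step, and it is exactly here that the ``higher-rank projection'' buys us the endpoint exponent $(n-1+k)$ over more naive constructions based on actual lattice points of $\R^n$.
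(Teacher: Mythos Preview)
Your high-level strategy matches the paper's: positive-definite weight, projection of a rank-$N$ lattice with $N\gg n$ to $\R^n$, $C^k$ perturbation of $\Sigma$ to pass through many projected lattice points, and control of $\|\mathcal M w\|_\infty$ via genericity of the projection. However, you misidentify the technical core. The surface-side counting is \emph{not} a Schmidt--Bombieri--Pila determinant argument; the paper instead proves a probabilistic convex-geometry estimate (\cref{thm-convex-density}): for any convex $T\subset\R^N$ with $|T|\lesssim 1$ and $g\in SO(N)$ uniform, $\mathbb P[|gT\cap\Z^N|\geq K]\lesssim K^{-N}$. This single $L^N$-type tail bound (with its \cref{cor-convex-density}) is the engine driving \emph{both} halves of the construction. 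Applied to $1\times R$ tubes lifted to $\R^N$, it gives $\|\mathcal M w\|_\infty\lesssim R^{O(n^2/N)}\leq R^\eps$ once $N\sim_\eps 1$ is chosen large; applied to the $(R^{-\beta})^{\times m}\times (R^{-\beta k})^{\times(n-m)}$ slabs around caps of $\Sigma$, it shows each slab captures $\approx 1$ projected lattice point rather than a few slabs hogging them all, so $\gtrapprox R^\alpha$ caps are available to perturb through. Your sketch treats these as two separate mechanisms (one ``genericity'', one a number-theoretic count), but in the paper they are the same lemma used twice.

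Two smaller corrections. First, the higher rank is not merely an optimization of the endpoint: the warm-up in \cref{Latticediscussionsec} shows that the rank-$n$ lattice in $\R^n$ fails to violate \cref{local-mt} at all for surfaces like the parabola, so taking $N\gg n$ is what makes the counterexample exist, not just what sharpens the exponent. Second, maintaining strict convexity is not the delicate step you flag: since the perturbation is small in $C^k$ with $k\geq 2$, it is automatic whenever the starting $\Sigma$ is strictly convex. The genuinely delicate step is proving \cref{thm-convex-density} uniformly over all convex $T$, which requires the auxiliary convex-geometry lemmas on $I(a,b)$ in \cref{subsec-shape}.
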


The aforementioned \cite{guth-barrier-22, ciw-mt-24} show that one can take $\alpha > \frac{n-1}{n+1}$, but cannot go further by the wave packet method.  We also mention a recent work \cite{mulherkar2025random}, where it was proved that a generic class of weights satisfy    \cref{local-mt}.

In this paper, we show that \cref{local-mt} fails by a power of $R^{\frac{n-1}{n-1+k}-\varepsilon}$ for a set of $C^k$ hypersurfaces that are dense in the $C^k$ topology, therefore including many strictly convex ones. More precisely, we prove

\begin{theorem}[Power blowup for Mizohata-Takeuchi for $C^k$ hypersurfaces]\label{powerblowupthm}
    Let $\Sigma\subset\R^n$ be a compact $C^k$ hypersurface. Then there exists an arbitrarily small $C^k$ perturbation $\Sigma'$ of $\Sigma$ such that given any $R>1$ and $\eps>0$, one can find some $f\in L^2(\Sigma')$, some weight $w:\R^n\to[0,1]$ such that
        \[\int_{B_R}\abs{\ext_{\Sigma'}f(x)}^2w(x)dx\gtrsim_\eps R^{\frac{n-1}{n-1+k}-\eps}\pl f__2{\Sigma'}^2\pl\mathcal Mw__\infty{\mathbb S\eo n}.\]
\end{theorem}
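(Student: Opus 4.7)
The overall strategy is to follow the positive-definite weight framework of \cite{cairo-counterexample-25}: pick a configuration $\{\xi_j\}_{j=1}^M\subset\Sigma'$, take $f=\sum_j\psi_j$ a sum of $L^2$-normalized bumps at the $\xi_j$ of surface-width $\sim R^{-1}$, and take $w=|\widehat\nu|^2$ for a positive measure $\nu$ dual to the difference set $\{\xi_j-\xi_k\}$. Positive-definiteness of $w$ collapses both $\int|\ext_{\Sigma'}f|^2w$ and $\|\mathcal Mw\|_{L^\infty(\mathbb S^{n-1})}$ into additive-combinatorial quantities on $\{\xi_j\}$ and $\operatorname{supp}\nu$. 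The hypercube configuration of \cite{cairo-counterexample-25} produced only a $\log R$ loss; the plan here is to replace it with a genuine lattice configuration produced by a polynomial perturbation of $\Sigma$, which should yield a power loss whose exponent is dictated by how many integer points can be packed onto a rescaling of a $C^k$ hypersurface.

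\textbf{Lattice points on $C^k$ hypersurfaces.}
Near a point of $\Sigma$, write $\Sigma$ as a graph $x_n=\phi(\vec x)$ over a small $U\subset\R^{n-1}$, and replace $\phi$ on $U$ by a polynomial $P$ of degree $k$ that is $C^k$-close to $\phi$, smoothly patched outside $U$. The resulting $\Sigma'$ is a small $C^k$ perturbation of $\Sigma$ that is locally algebraic of degree $k$. For a rescaling $\rho\gg 1$ (to be chosen below as a specific power of $R$), the rescaled surface $\rho\Sigma'$ over $\rho U$ is the graph of the degree-$k$ polynomial $\Psi(\vec X)=\rho P(\vec X/\rho)$. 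The heart of the construction is the following arithmetic fact: by a simultaneous-Diophantine-approximation argument applied to the $\binom{n-1+k}{k}$ Taylor coefficients of $\Psi$, one can choose $P$ (with an arbitrarily small $C^k$-adjustment of $\phi$) so that $\rho\Sigma'\cap\Z^n$ contains a number $M$ of lattice points that grows as a positive power of $\rho$. This is the ``projection of a higher-rank lattice $\Z^{n-1+k}$'' picture alluded to in the abstract: the extra $k$ coordinates parametrize the degree-$k$ Taylor data of $P$, and a generic linear projection to $\R^n$ realizes each such data point as an integer point on $\rho\Sigma'$. The same input furnishes the by-product construction of convex $C^2$ hypersurfaces whose rescalings carry many lattice points.

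\textbf{The weight, the estimate, and the main obstacle.}
With $\nu$ a positive measure supported on a subset $Y\subset\rho^{-1}\Z^n\cap B_R$ chosen so that $Y-Y$ overlaps heavily with $\{\xi_j-\xi_k\}$, Plancherel unfolds $\int_{B_R}|\ext_{\Sigma'}f|^2 w$ into a weighted count of matched quadruples $(j,k,y,y')$, giving a large lower bound proportional to $M^2$ times the number of matches. Meanwhile $\|\mathcal Mw\|_{L^\infty(\mathbb S^{n-1})}$ is controlled by the worst-direction tube integral of $|\widehat\nu|^2$, which stays small provided $Y$ is geometrically spread across $B_R$. Dividing by $\|f\|_{L^2(\Sigma')}^2=M$ and optimizing $\rho$ and $|Y|$ as powers of $R$ should then yield the target ratio $R^{(n-1)/(n-1+k)-\eps}$. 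The main obstacle is the simultaneous coupling of (a) a large lattice-point count $M$ with only a tiny $C^k$-perturbation of $\phi$ (the Diophantine loss degrades as the perturbation is shrunk), and (b) a dual set $Y$ that is both arithmetically rigid enough to realize the Plancherel count and geometrically spread enough to keep $\|\mathcal Mw\|_{L^\infty}$ small. This tension is exactly what the projection of a higher-rank lattice is designed to resolve, and producing a single $\Sigma'$ that is independent of $R$ yet achieves the claimed power loss for every $R>1$ and every $\eps>0$ is where the technical bulk of the proof lies.
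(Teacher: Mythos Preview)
Your proposal misidentifies the ``higher-rank lattice'' and, as a consequence, does not contain the mechanism that actually makes the construction work. You take the rank to be $n-1+k$ (the number of Taylor coefficients of a degree-$k$ polynomial) and propose a simultaneous-Diophantine / polynomial-perturbation scheme. The paper does neither of these things. The rank $N$ of the lattice is a very large constant depending only on $\eps$ (and tending to $\infty$ as $\eps\to 0$), completely decoupled from $k$. The weight is $w_R=W_R\circ\iota$, the restriction of a standard $\R^N$-lattice weight to a \emph{random} $n$-plane $V\subset\R^N$; the surface $\Sigma_R$ is obtained not by replacing $\phi$ with a polynomial, but by cutting $\Sigma_0$ into $\sim R^{\alpha}$ caps of diameter $R^{-\alpha/m}$ and locally bumping each cap (inside its $C^k$-slab of thickness $R^{-k\alpha/m}$) to pass through one projected lattice point $\iota^t(\mathcal L_0)$. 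The exponent $(n-1)/(n-1+k)$ falls out of the slab volume equation $|S_\theta'|\cdot R^l\sim 1$, i.e.\ $\beta(m+(n-m)k)=l$, not from any Diophantine parameter count.

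The tension you label ``main obstacle''~(b) --- making the dual set $Y$ simultaneously additive enough and geometrically spread enough to keep $\|\mathcal M w\|_{\infty}$ small --- is precisely where your outline stops and where the paper's real content lies. In the paper it is resolved by a probabilistic convex-geometry input, \cref{thm-convex-density} (and its \cref{cor-convex-density}): for a uniformly random $g\in SO(N)$ one has $\mathbb P[|gT\cap\Z^N|\ge K]\lesssim K^{-N}$ for any convex $T$ of volume $\lesssim 1$. Applied to $l$-tubes this gives $\|\mathcal M_l w\|_\infty\lesssim R^{O_n(1/N)}\le R^{\eps}$ with high probability, while the same theorem in the other direction shows that $\gtrapprox R^{\alpha}$ caps are ``good''. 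Your proposal has no substitute for this step; indeed the paper's \cref{Latticediscussionsec} explains that the natural rank-$n$ lattice ansatz (which is essentially what you describe after Diophantine tuning) fails to contradict \cref{latticecomparison}, and this is exactly why one must pass to rank $N\gg n$ and argue probabilistically. Finally, the single-$\Sigma'$-for-all-$R$ issue is handled not by Diophantine approximation but by the elementary patching argument of \cref{lemma-technical}: one runs the scale-$R_j$ construction on disjoint shrinking patches with $R_j=2^{2^{j^{0.99}}}$.
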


\cref{powerblowupthm} shows the Local Mizohata-Takeuchi Conjecture fails for many $C^k$ hypersurfaces, including many strictly convex ones. Moreover, in the $C^2$ case with $k=2$, recall \cite{guth-barrier-22, ciw-mt-24} proved the $R^{\frac{n-1}{n+1}+\eps}$ power loss version for every strictly convex $C^2$ hypersurface. \cref{powerblowupthm}  shows that their power-loss result is sharp for general  strictly convex $C^2$ hypersurfaces, up to the endpoint.

In \cref{powerblowupthm}, the way how the blowup depends on smoothness is  unusual in Fourier restriction theory and somewhat surprising. It is also quite intriguing to us that \cref{powerblowupthm} does not say anything about Mizohata-Takeuchi for some of the most familiar hypersurfaces like the paraboloid or the sphere. It remains a mysterious and very challenging question to prove or disprove the Local MT Conjecture for these individual hypersurfaces.

\subsection{A sketch of the  method of our construction}

The starting point of our construction is an observation by the first author \cite{cairo-counterexample-25} that  for positive definite weights $w$, \cref{original-mt-inequality} boils down to a combinatorial problem. Based on this observation, the first author \cite{cairo-counterexample-25} was able to found weights leading to $\log$-blowup based on a projection of a hypercube in very high dimension ($\sim \log R$). In the present paper, we use the projection of a lattice in a high constant dimension (that we call $N$) to construct the counterexample for power blowup.

Along the way of our proof we use probabilistic methods and first prove a few lemmas about intersections between convex bodies and lattice points in \cref{subsec-shape}. We feel they may be of independent interest. Because of these probabilistic considerations, our results only apply to artificial $C^k$ hypersurfaces and our method does not produce counterexamples that lead to power loss for MT for  individual hypersurfaces $\Sigma$ of interest, like the sphere or the paraboloid.

In \cref{Latticediscussionsec}, we will further outline our main idea in a quantitative way using a rank $2$ lattice in $\R^2$. This will fail to construct a counterexample, but the readers will see how it naturally gives conceptually nice perspectives that motivate the higher rank construction.

\subsection{Mizohata-Takeuchi-type estimates for codimension $\geq2$.}\label{subsec-codim}

It is also natural to consider analogs of \cref{originalMT} where $\Sigma\subset\R^n$ is now a submanifold of dimension $m< n-1$. This was considered in \cite{clpy-curves-25} and we generalize Mizohata-Takeuchi by considering the $l$-plane-transforms

\[M_{l} w(b):=\sup_{T: 1-\text{neighborhood of some translate of }b}\int_{T}w dx\]
where $w\geq 0$ and $b \in \text{Gr} (l, n)$ is the Grassmannian of $l$-dimensional subspaces. With  this setup, we are interested in local estimates of the form
\begin{equation}\label{mt-powers-alpha-codimension}
    \pl\ext_{\Sigma} f__2{B_R;w(x)dx)}^2\lesssim R^{n-m-l+ \alpha}\pl\mathcal M_{l}w__\infty{\text{Gr} (l, n)}\pl f__2{\Sigma}^2
\end{equation} 
for some $\alpha$ and wonder whether $\alpha$ can be $0$. Our choice of the exponent is motivated the standard observation that if we take $f = 1_{\Sigma}$ and $w = 1_{B_R}$, we can explicitly compute both sides to show that it is necessary to have $\alpha \geq 0$ in this normalization.

\cite{clpy-curves-25} focused on the $m=1$ case and when the curve is $C^{n+1}$ and well-curved (i.e. having Wronskian away from $0$ all the time). Theorem 1.3 in \cite{clpy-curves-25} asserts that in this case, if one takes $l=1$, then any $\alpha>R\ef2{n(n+1)}$ suffices. In the present paper, we will show in \cref{thm-counterexample} that if we want \cref{mt-powers-alpha-codimension} for all $C^k$ curves with $l=1$, it is necessary to take $\alpha\geq\frac1{1+(n-1)k}$.  Comparing it with Theorem 1.3 of \cite{clpy-curves-25}, we see, interestingly, that when smoothness improves from $C^k (k<\frac{n+2}2)$ to $C^{n+1}$, the answer to the $m=1, l=1$ case does change in all higher dimensions $>2$. 

\subsection{Notations}  We will often work with a large parameter $R>1$ or a small parameter $\delta \in (0, 1)$. In this context we say non-negative quantities $A\gtrapprox B$ if and only if $A \gtrsim_{\varepsilon} R^{-\varepsilon} B, \forall \varepsilon$ or $A \gtrsim_{\varepsilon} \delta^{\varepsilon} B, \forall \varepsilon$. Similarly we have the notions of $\lessapprox$ and $\approx$. In this paper, our main task is to prove the main Theorem \ref{powerblowupthm} and a generalization (\cref{thm-counterexample}), and we will have a large constant dimension $N$ that only depends on the $\varepsilon$ in these theorems. We often have constants depending on that $N$ and note that we can hide those constants under $\gtrapprox$, etc.  $A \gg B$ means that, if $B$ is fixed, then $A$ is sufficiently large. 

We use $B_R$ or $B_R^d$ (in case we want to specify the ambient dimension) to define an $R$-ball in $\R^d$. 

We will use two types of smooth cutoffs. For any $n$, let $\bb_1:\R^n\to\C$ be a nonnegative, smooth, radially symmetric, compactly supported bump function with nonnegative Fourier transform and
    \[1_{B_1}\leq\bb_1\leq1_{B_2}\]
By $\bb_R$, we mean the rescaled version $\bb_R(x)=\bb_1(R\inv x)$. We also set $\dd_1=\hat\bb_1$, and $\dd_{R\inv}=\widehat{\bb_R}$. This way, multiplication by $\bb_R$ can be thought of as a smooth localization to $B_R$, and $\dd_{R\inv}$ can be thought of as a scale-$R\inv$ approximation to $\delta_0$. We also write $\bb_{R,x}=\bb_{R}*\delta_{x}$ and $\dd_{R\inv,\xi}=\dd_{R\inv}*\delta_{\xi}$. Given a discrete set $S$, we define $\delta_S=\sum_{s\in S}\delta_s$.

We will use $c, C, C_{\eps}$ to denote  constants $>0$ that can change from line to line. $c, C$ are absolute (possibly depending on the dimension) and $C_{\eps}$ can depend on $\eps$.

If $U\subset\R^n$ is some set, then we write $\mathcal N_a(U)$ for the $a$-neighborhood of $U$.  If $U$ is a set and $V$ is a convex set, then we write $U\subsetsim V$ to denote $U\subset CV$ for some $C>0$,  where $CV$ is a rescaling of $V$ by $C$ times around any point in $V$.

\section*{Acknowledgements} RZ is supported by NSF DMS-2143989 and the Sloan Research Fellowship. He would like to thank Larry Guth and Hong Wang for helpful discussions, in particular on  whether a Gauss sum example that is related to a rank $2$ lattice can serve as a counterexample for the Mizhohata-Takeuchi conjecture. 

\section{An initial reduction}
    As a starting point, we present an equivalent formulation of the local Mizohata-Takeuchi Conjecture. Under the hypotheses in \cref{subsec-codim}, let $C_{\Sigma,l}(R)$ be the smallest constant such that
    \[\pl\ext_{\Sigma} f__2{B_R;w(x)dx)}^2\lesssim R^{n-l-m}C_{\Sigma,l}(R)\pl f__2{\Sigma}^2\pl\mathcal M_lw__\infty{Gr (l, n)}\]
    for all weights $w$ supported in $B_R$. Furthermore, let $C_{\Sigma,l}'(R)$ be the smallest constant such that
    \[\pl\hat w__2{\Sigma}\leq R\ef{n-l-m}2C_{\Sigma,l}'(R)\pl w__1\ef12\pl\mathcal M_lw__\infty{Gr (l, n)}\ef12 \label{equivMT}\]
    for all weights $w$ supported in $B_R$.
    
    \begin{lemma}\label{initialreductionlem}
        $C_{\Sigma,l}(R)\approx\p{C_{\Sigma,l}'(R)}^2$.
    \end{lemma}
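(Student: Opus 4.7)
The plan is to prove the equivalence $C_{\Sigma,l}(R) \approx (C'_{\Sigma,l}(R))^2$ by establishing $(C')^2 \lesssim C$ and $C \lessapprox (C')^2$ separately, both via Fourier-duality / $TT^*$-type arguments.

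For the direction $(C'_{\Sigma,l}(R))^2 \lesssim C_{\Sigma,l}(R)$: Given any $g \in L^2(\Sigma)$ and non-negative weight $w$ supported in $B_R$, Parseval identifies
\[\int_\Sigma \hat w(\xi)\,\overline{g(\xi)}\,d\sigma(\xi) = \int_{\R^n}\overline{\ext g(x)}\,w(x)\,dx.\]
Cauchy-Schwarz in $L^2(w\,dx)$ bounds the right side by $\|\ext g\|_{L^2(w)}\|w\|_1^{1/2}$, and applying the hypothesized unprimed inequality to $\|\ext g\|_{L^2(w)}$ yields
\[\left|\int \hat w\,\bar g\,d\sigma\right| \leq R^{(n-l-m)/2}\sqrt{C_{\Sigma,l}(R)}\,\|g\|_{L^2(\Sigma)}\|\mathcal M_l w\|_\infty^{1/2}\|w\|_1^{1/2}.\]
Taking the supremum over $\|g\|_{L^2(\Sigma)}\leq 1$ recovers $\|\hat w|_\Sigma\|_{L^2(\Sigma)}$ on the left, and comparison with the definition of $C'$ gives $C'_{\Sigma,l}(R)\leq \sqrt{C_{\Sigma,l}(R)}$.

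For the reverse direction $C_{\Sigma,l}(R) \lessapprox (C'_{\Sigma,l}(R))^2$: The $TT^*$ identity for $T\colon L^2(\Sigma)\to L^2(B_R,w\,dx)$, $Tf = \ext f$, whose adjoint is $T^*g = \widehat{gw}|_\Sigma$, reformulates the unprimed inequality into the equivalent statement
\[\|\widehat{gw}|_\Sigma\|^2_{L^2(\Sigma)}\lesssim R^{n-l-m}\,C_{\Sigma,l}(R)\,\|\mathcal M_l w\|_\infty\int |g|^2 w\,dx \qquad (\star)\]
for all complex $g\in L^2(w\,dx)$. Setting $h = gw$, I would first extend the primed inequality to complex $h$ by decomposing $h$ into its four non-negative parts (real$\pm$, imaginary$\pm$), at the cost of an absolute constant, to obtain
\[\|\hat h|_\Sigma\|^2\lesssim R^{n-l-m}(C'_{\Sigma,l}(R))^2\,\|h\|_1\|\mathcal M_l|h|\|_\infty.\]
The task then reduces to the Kakeya-type comparison
\[\|h\|_1\|\mathcal M_l|h|\|_\infty\lessapprox \|\mathcal M_l w\|_\infty\int|h|^2/w\,dx\]
for every admissible pair $(h,w)$, which I would prove by dyadic pigeonholing both $|h|$ and the tube masses $\int_T|h|$ into a single band carrying the bulk of $\|h\|_1$, and within that band running the tube-by-tube Cauchy-Schwarz $(\int_T|h|)^2\leq \|\mathcal M_l w\|_\infty\int_T|h|^2/w$ over a near-disjoint cover by $\approx\|h\|_1/\|\mathcal M_l|h|\|_\infty$ heavy $l$-tubes extracted greedily.

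The main obstacle will be this last geometric/combinatorial step: constructing a near-disjoint tube family that captures enough of $\|h\|_1$ at a uniform per-tube mass, for arbitrary complex $h$. Handling general $h$ forces a careful dyadic pigeonhole over level sets and tube-masses, and the resulting logarithmic losses must be absorbed into the $R^\varepsilon$ slack allowed by the $\lessapprox$ notation. Combining the three ingredients then yields $(\star)$ with constant $\approx (C')^2$, establishing $C \lessapprox (C')^2$ and hence the claimed equivalence.
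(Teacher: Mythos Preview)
Your first direction $(C')^2 \leq C$ is correct and essentially identical to the paper's argument.

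Your second direction has a genuine gap: the ``Kakeya-type comparison'' you reduce to,
\[
\|h\|_1\,\|\mathcal M_l|h|\|_\infty \lessapprox \|\mathcal M_l w\|_\infty \int |h|^2/w,
\]
is simply false as stated. Take $n=2$, $l=1$, $w=1_{B_R}$, and $h = R^{1/2}\,1_{T_0} + 1_{B_R\setminus T_0}$ for a single $1\times R$ tube $T_0\subset B_R$. Then $\|h\|_1\sim R^2$, $\|\mathcal M_1|h|\|_\infty\sim R^{3/2}$, $\|\mathcal M_1 w\|_\infty\sim R$, and $\int|h|^2/w\sim R^2$, so the left side is $\sim R^{7/2}$ while the right side is $\sim R^3$; the inequality fails by a power of $R$, not merely $R^\eps$. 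Your greedy tube-covering heuristic breaks exactly here: the bulk of $\|h\|_1$ sits in tubes of mass $\sim R$, not $\sim\|\mathcal M_1|h|\|_\infty=R^{3/2}$, and there is only one tube that heavy, not $\|h\|_1/\|\mathcal M_1|h|\|_\infty\sim R^{1/2}$ many. The problem is that applying $C'$ to the composite weight $|h|=|g|w$ lets a single concentrated spike in $g$ inflate $\|\mathcal M_l|h|\|_\infty$ without contributing comparably to $\int|g|^2w$.

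The paper sidesteps this by never feeding $|g|w$ into $C'$. Instead it stays on the $f$-side: decompose $B_R$ into level sets $S_{j,e_1,e_2}$ on which $|\ext f|\sim 2^j$ and $\mathrm{Re}\,\ext f$, $\mathrm{Im}\,\ext f$ have fixed signs, and set $w_{j,e_1,e_2}=1_{S_{j,e_1,e_2}}\cdot w$. On each such set the sign constraint gives $\int_{S_{j,e_1,e_2}}|\ext f|^2 w \sim \big|\int \ext f\cdot w_{j,e_1,e_2}\big|^2\,\|w_{j,e_1,e_2}\|_1^{-1}$, and then Parseval, Cauchy--Schwarz on $\Sigma$, and the definition of $C'$ applied to the \emph{sub-weight} $w_{j,e_1,e_2}\leq w$ yield the bound with $\|\mathcal M_l w_{j,e_1,e_2}\|_\infty\leq\|\mathcal M_l w\|_\infty$ for free. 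Summing over the $O(\log R)$ triples $(j,e_1,e_2)$ gives the $\lessapprox$. The crucial point is that the weight handed to $C'$ is a pointwise restriction of $w$, so no comparison of maximal functions is ever needed.
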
 

    \begin{remark}
        The implied constant in Lemma \ref{initialreductionlem} is allowed to depend on $\Sigma$, and we will suppress this dependence.
    \end{remark}
    \begin{proof}[Proof of Lemma \ref{initialreductionlem}]
        First, we show $C_{\Sigma,l}(R)\geq \p{C_{\Sigma,l}'(R)}^2$ as follows: Note that
            \[\sup_f\pl f__2{\Sigma}\ie2\int_{B_R}|\ext f(x)|^2w(x)dx\geq\sup_f \pl f__2{\Sigma}\ie2 \|\ext f\cdot w\|_{L^1 (B_R)}^2\pl w__1{\R^n}\inv
                \label{duality-1}\]
        by Cauchy-Schwarz, and
            \[\sup_f\pl f__2{\Sigma}\inv\|\ext f\cdot w\|_{L^1 (B_R)}=\sup_f\pl f__2{\Sigma}\inv\int_\Sigma f(\sig)\hat w(\sig)d\sigma(\sig)=\pl \hat w__2{\Sigma}.\]
        by duality. Therefore,
            \[\pl\hat w__2{\Sigma}^2\pl w__1{\R^n}\inv\leq\sup_{f\in L^2{\Sigma;d\sigma}}\pl f__2{\Sigma}\ie2\int_{B_R}|\ext f(x)|^2w(x)dx\leq R^{n-l-m}C_{\Sigma,l}(R)\pl\mathcal M_lw__\infty{SO(n)},\]
        which implies $C_{\Sigma,l}(R)\geq\p{C_{\Sigma,l}'(R)}^2$.
    
        Now, we would like to show $C_{\Sigma,l}(R)\lessapprox \p{C_{\Sigma,l}'(R)}^2$. Recall from \cref{subsec-codim} that both $C_{\Sigma,l}$ and $C_{\Sigma,l}'$ are $\gtrsim1$. If we assume $\|f\|_{L^2 (\Sigma)}\sim 1$, it then suffices to bound each $\int_{S_{j, e_1, e_2}} |\ext f|^2 w dx$. Here  signatures $e_1, e_2 \in \{+, -\}$ and $S_{j, e_1, e_2}$ is the set where $|\ext f| \sim 2^j \in [R^{-\frac{m}{2}}, C]$ and Re$(f)$ and Im$(f)$ have signs $e_1, e_2$, respectively.\footnote{It is easy to see $\|\ext f\|_{\infty}\lesssim \|f\|_{L^2 (\Sigma)} = 1$. For contributions on the subset $|\ext f| < R^{-\frac{m}{2}}$, we use the wasteful estimate $\pl\mathcal M_lw__\infty\gtrsim R^{-(n-l)}\pl w__1$ and the fact $C_{\Sigma,l}'(R)\gtrsim 1$.} Let $w_{j, e_1, e_2} = 1_{S_{j, e_1, e_2}} w$. We see 
        \begin{align}
            \int_{S_{j, e_1, e_2}} |\ext f|^2 w dx \sim &\p{\int \ext f\cdot w_{j, e_1, e_2} dx}^2 \|w_{j, e_1, e_2}\|_{L^1}^{-1}\sim\bigg(\int_{\Sigma}f\cdot \hat{w}_{j, e_1, e_2}d\sigma\bigg)^2\|w_{j, e_1, e_2}\|_{L^1}^{-1}\\\leq &(C_{\Sigma,l}' (R))^2 \|f\|_{L^2 (\Sigma)}^2 \|\mathcal M_lw_{j, e_1, e_2}\|_{L^{\infty}}
            \end{align}
        by the definition of $C_{\Sigma,l} '$ and Cauchy-Schwarz. The term $\|\mathcal{M}w_{j, e_1, e_2}\|_{L^{\infty}}$ is then bounded by $\|\mathcal{M}w\|_{L^{\infty}}$  and adding this up over all $(j, e_1, e_2)$ will result in an acceptable $\log$ factor.
        \end{proof}

\section{Warm up: A study of the lattice example}\label{Latticediscussionsec}
Before providing the counterexample, we present an example that casts doubt on the Mizohata-Takeuchi Conjecture for certain hypersurfaces $\Sigma$, but nonetheless falls short of a counterexample.

Suppose for simplicity that $n=2$ and $l=1$, then set
    \[h_0=\sum_{\xi_0\in R\ief12\Z^2}\delta_{\xi_0}\]
    \[h=R\inv(h_0\bb_1)*\dd_{R\inv}
        \label{def-h-lattice-example}\]
Thus, $h$ is an $L^1$-normalized approximation to the (truncated) lattice $L_0 : =R\ief12\Z^2\cap B_1$, while $\hat{h}$ is an approximation to the dual lattice $R\ef12\Z^2\cap B_R=:L_0'$, which has $L^{\infty}$-norm $\sim 1$ and is locally constant at scale $1$.

Motivated by \cite{cairo-counterexample-25}, we now set $w(x)=|\hat h(x)|^2$. Recall \cref{initialreductionlem}, the local MT Conjecture would imply that
    \[\pl h*\tilde h__2{\Sigma}\lessapprox \pl h__2\pl\mathcal Mw__\infty\ef12
        \label{lattice-mt}.\]
        
 Note that $h*\tilde h$ behaves roughly the same as $h$. 
 This means that we expect\footnote{This can be easily made rigorous for many examples of $\Sigma$ such as the sphere or the paraboloid.}
    \[\pl h*\tilde h__2{\Sigma} \sim \pl h__2{\Sigma}\sim R(|S\cap L_0|R\inv)\ef12\sim  \pl h__2|S\cap L_0|\ef12,\]
where $S$ is the $R\inv$-neighborhood of $\Sigma$. 

Recall $\hat h$ is an approximation of the lattice $L_0 '$ that is locally constant at scale $1$. Since $\| \hat h\|_\infty\sim1$, we have 
    \[\pl\mathcal Mw__\infty\approx\sup_{1\times R-\text{tube }T\subset\R^2}|L_0 '\cap T|.\]

 Thus, \cref{lattice-mt} can be written in the form
\[|L_0 \cap S| \lessapprox \sup_T |L_0 ' \cap T|\label{latticecomparison}\]
This inequality is suspicious, since it requires an unconditionally favorable comparison between the lattice point counting near the curve $\Sigma$ and the number of dual lattice counting near a line, that is surprisingly simple and may seem suspicious. Nevertheless, we were unable to obtain a contradiction for any $\Sigma$ we know by this construction. For example, if $\Sigma$ is the parabola, then $|S\cap L_0|\sim R\ef14$, $\sup_{T}|T\cap L_0'|\sim R\ef12$ and \eqref{latticecomparison} holds, even with some extra room. 


Next we introduce our main conceptual innovation in this paper. Note that in the above argument, the most important ingredients include the fact that $h * \tilde {h}$ is like $h$, and the reduction to \cref{latticecomparison}. In sections below, we will construct another $h$ around a higher rank generalized arithmetic progression, that can also be viewed as a projection of a higher dimensional lattice to $\R^n$. The dimension $N$ of the lattice is going to be chosen as a large constant depend on the $\varepsilon$ we want in \cref{powerblowupthm}. This grants $h * \tilde {h}$ still looks a lot like $h$, while providing more flexibility (in particular, unlike in the lower rank construction, much less points are automatically on a line) that the analog of \cref{latticecomparison} has more chances to fail. We will prove the analog of \cref{latticecomparison} fails for many $C^k$ perturbations of any given $C^k$ hypersurface by a probabilistic argument that shows  the analog of \cref{latticecomparison} can fail a lot for the projection of a generic lattice. To this end, we prove a main \cref{thm-convex-density} on statistics of intersections between convex bodies and lattices in an arbitrary dimension. We will apply this theorem in many ways, and hope this nice-looking theorem can be of independent interest.

For anything like \cref{latticecomparison} to fail, we need to produce many lattice points on $\Sigma$. We will see that this is possible for some $C^k$-perturbation of any $C^k$-curve, but will be harder when $k$ becomes larger. We intuitively introduce how we produce these using a toy example of a $C^2$ perturbation of a $C^2$ curve here. This will be rigorously turned into a  theorem (\cref{sharplatticethm}) in \cref{richsurfacessection} with the help of \cref{thm-convex-density} later. Let $R>1$ be a parameter and  $L$ be the grid $(R^{-\frac{1}{2}}\Z)^2 \subset \R^2$. Suppose $\Sigma$ is any given compact $C^2$ curve (such as the circle or the parabola). Note that $\Sigma$ can be divided into $\sim R\ef13$ segments of length $\sim R\ief13$. Each segment is contained in a box of dimension $\sim R\ief13\times R\ief23$. Thus if we randomly rotate and translate the curve, we can expect each box contains $\approx 1$ lattice point in $L$. But a random rigid motion image of the union of the box is expected to contain $\sim R\ef13$ points in $L$. If we choose such a good rigid motion and perturb the curve locally, we can expect to find a small $C^2$ perturbation of $\Sigma$ in each box and obtain in this way a new curve that contains $\gtrapprox R\ef13$ points in $L$. 

Our construction is inspired by the $\log$-blowup construction of the first author \cite{cairo-counterexample-25}, where the projection of a very high ($\sim \log R$) dimensional hypercube is used. We found that this kind of high-dimensional-projection ideas in \cite{cairo-counterexample-25} and this paper are rarely used in Euclidean harmonic analysis, and hope to see more applications on other problems in the future.

\begin{remark}
    We remark that the rank $n$ lattice example in $\R^n$ and connections to MT was independently noticed and studied by others like Anthony Carbery \cite{Carbery-survey-19}, Larry Guth and Hong Wang (personal communication with the second author) and Xuerui Yang \cite{y-exp-sums-25} (who used this lattice construction to find counterexamples to a weighted restriction estimate). We also point out that to disprove local MT, it is very important to take the rank much larger than $n$ in our method.
\end{remark}

\section{Convex geometry lemmas and Kakeya-type maximal estimates for higher dimensional lattice configurations}\label{subsec-shape}
    Our counterexample will be based on high-dimensional lattice configurations. Before going into the proof, we develop some useful tools in this section. These will be based on a theorem (Theorem \ref{thm-convex-density}) about how many points a rotated convex body can intersect a lattice, and how often can this happen, in a higher dimension. We feel this theorem (along with the convex geometry \cref{lemma-intersection-estimate} needed to prove it) may be of independent interest, and are not aware of elsewhere it is recorded, so we include a proof.

    In this section, $N$ will be a large (but eventually fixed) dimension, and $d$ will also denote a dimension. We will use them to state our results and all implied constants can depend on  $N$ or $d$. Whenever there is an implied measure on $SO(N)$ or $SO(d)$, we always mean the standard Haar measure.

    \subsection{Some useful lemmas about the intersection of convex sets}
    
    First we prove some lemmas in convex geometry. Let $d>0$ be any dimension and $T\subset\R^d$ be any bounded symmetric convex set. By John's ellipsoid theorem \cite{john2013extremum}, there exists a nonincreasing dyadic tuple $(a_1,\cdots,a_d)$ and a rectangular box $B\supset T$ with side lengths $a_1\geq \cdots \geq a_d$ such that $T$ contains a translation of $cB$. In this case, we say that $T$ is \emph{equivalent to} the dyadic box $B$, and that $(a_1,\cdots,a_d)$ are the dimensions of $T$, denoted $\dim(T)$.

    Let $\mathfrak D$ denote the set of all nonincreasing tuples $s=(s_i,\cdots,s_d)$ of dyadic numbers. We define a partial ordering on $\mathfrak D$ by $s\leq t$ iff $s_i\leq t_i$ for all $i\in[d]$. We may define $\lesssim$, $\gtrsim$, and $\sim$ analogously. Given any $(s,t)$, there is a unique infimum and supremum with respect to $\leq$, which we denote $s\meet t$ and $s\join t$, respectively.

    Given any $s\in\mathfrak D$, let $[s]$ the dyadic box $\left[-\frac12s_1,\frac12s_1\right]\times\cdots\times\left[-\frac12s_d,\frac12s_d\right]$ and let $|s|=\prod_{i=1}^ds_i$ be the volume of $[s]$.
    
    \begin{lemma}\label{lemma-dyadic-contain-prob}
        Let $a=(a_1,\cdots,a_d)$ and $b=(b_1,\cdots,b_d)$ be two nonincreasing dyadic tuples with $a\lesssim b$, and let $g\in SO(d)$ be a uniformly distributed random variable. Then
            \[\mathbb P\Big[\big|g[a]\cap[b]\big|\sim|a|\Big]\sim I(a,b)\]
        where
            \[I(a,b):=\prod_{i,j=1}^d\min(a_i\inv b_j,1)\]
    \end{lemma}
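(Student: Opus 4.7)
The plan is to reduce the geometric condition $|g[a]\cap[b]|\sim|a|$ to an entrywise condition on $g$, and then compute the probability of the latter by a column-by-column construction of $g$. For the reduction, I would show that the event $\{|g[a]\cap[b]|\sim|a|\}$ is equivalent, up to constants depending on $d$, to the entrywise event $|g_{ij}|\lesssim\min(1,a_j^{-1}b_i)$ for all $i,j$. For the forward direction I would apply Brunn's concavity theorem: if $K=g[a]$ has $|K\cap[b]|\geq c|K|$, then in particular $|K\cap\{|x_i|\leq b_i/2\}|\geq c|K|$ for each $i$, which forces the extent of $K$ in direction $e_i$, namely $\sum_j a_j|g_{ij}|$, to be $\lesssim_{d,c} b_i$. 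Since each $a_j|g_{ij}|$ is bounded by this row sum, the entrywise bound follows. For the converse, the entrywise bound yields $g[a]\subset C[b]$ for a constant $C=C(d)$, whence $C^{-1}g[a]\subset[b]$ gives $|g[a]\cap[b]|\geq C^{-d}|a|\gtrsim_d|a|$.

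Once this reduction is in place, I would compute the probability of the entrywise bound by constructing $g$ one column at a time. Let $v_j=ge_j$; these form a uniformly random orthonormal frame, with $v_j$ uniform on $\{v_1,\ldots,v_{j-1}\}^\perp\cap S^{d-1}$. Setting $t_{ij}=\min(1,a_j^{-1}b_i)$, the entrywise event becomes $|\langle v_j,e_i\rangle|\lesssim t_{ij}$ for all $i,j$. I would then prove and apply the following box-on-subsphere estimate: for any $k$-dimensional subspace $V\subset\R^d$ and radii $s_1,\ldots,s_d\in(0,1]$ satisfying a mild nondegeneracy condition, a uniformly random $v\in V\cap S^{d-1}$ satisfies $\mathbb{P}[|v_i|\leq s_i\;\forall i]\sim\prod_i s_i$ with constants depending only on $d$. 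The proof uses Gaussian comparison: parametrize $v=W/|W|$ where $W$ is a standard Gaussian on $V$; since $|W|\sim\sqrt{k}$ with high probability, the event becomes essentially $|\langle W,e_i\rangle|\leq s_i\sqrt{k}$, and the coordinates $\langle W,e_i\rangle$ form a jointly Gaussian vector with covariances $\langle P_Ve_i,P_Ve_j\rangle$ of absolute value $\leq 1$, so the joint probability compares, up to $d$-dependent constants, to the product of marginal Gaussian tails $\prod_i s_i$. Applying this at each step and multiplying across $j$ gives the total probability $\sim\prod_j\prod_i t_{ij}=I(a,b)$.

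The main obstacle I expect is the box-on-subsphere estimate. The Gaussian comparison absorbs the correlations between spherical coordinates, but two points need care. First, at column $j$ the subsphere $V=\{v_1,\ldots,v_{j-1}\}^\perp\cap S^{d-1}$ has orientation determined by the previously chosen columns, so the estimate must be uniform over $V$; this is manageable because the constants we obtain depend only on $d$, not on $V$. Second, the nondegeneracy condition $\sum_i s_i^2\gtrsim 1$ must be verified at every stage of the induction, and in our setting this reduces to the observation that $t_{jj}=\min(1,a_j^{-1}b_j)\sim 1$ for each $j$, which is guaranteed by the hypothesis $a\lesssim b$. All remaining technicalities contribute only $d$-dependent multiplicative constants, which are absorbed in the asymptotic $\sim$ in the conclusion.
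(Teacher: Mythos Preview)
Your reduction of $|g[a]\cap[b]|\sim|a|$ to the entrywise condition $|g_{ij}|\lesssim\min(1,a_j^{-1}b_i)$ is correct, and is in fact the paper's first step as well (there phrased as $v_i=ga_ie_i\in C_d[b]$). The gap is in the second half: the box-on-subsphere estimate you state is false uniformly in $V$, and your Gaussian argument does not prove it. For a concrete failure, take $V=e_d^\perp$; then $v_d\equiv0$, so $\mathbb P[|v_i|\le s_i\ \forall i]$ is independent of $s_d$, while $\prod_i s_i$ is not. The nondegeneracy $\sum_i s_i^2\gtrsim1$ is satisfied as soon as some other $s_i\sim1$, so it does not rule this out. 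Your Gaussian sketch misses this because the marginal $\langle W,e_i\rangle$ has variance $|P_Ve_i|^2$, which can be arbitrarily small; its tail probability is $\sim\min(1,s_i\sqrt k/|P_Ve_i|)$, not $\sim s_i$. Since the covariance of $(\langle W,e_i\rangle)_i$ is degenerate of rank $k<d$, no correlation inequality will reduce the joint probability to $\prod_i s_i$ either.

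What actually rescues the column-by-column scheme is precisely the conditioning you try to dismiss. Because $t_{i\ell}=\min(1,a_\ell^{-1}b_i)$ is nondecreasing in $\ell$, whenever $t_{ij}$ is small one has $|(v_\ell)_i|\lesssim t_{i\ell}\le t_{ij}$ for all $\ell<j$, hence $|P_{V^\perp}e_i|^2=\sum_{\ell<j}(v_\ell)_i^2\lesssim d\,t_{ij}^2\ll1$, so $|P_Ve_i|\sim1$. Thus the only $e_i$ that can be poorly projected onto $V$ are those with $t_{ij}\sim_d1$, for which the constraint is vacuous anyway. This monotonicity observation is the missing idea in your argument; it is exactly what the paper encodes by computing the dimensions of $[b]\cap v_1^\perp$ (showing they are $\sim b_j$ where $b_j\lesssim a_1$ and $\gtrsim a_1$ otherwise) and then inducting. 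Without it, the per-step conditional probability genuinely depends on $V$ and your product does not assemble to $I(a,b)$.
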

    \begin{proof}
        Let $v_i=ga_ie_i$, i.e. $v_i$ denotes the $i$th sidelength of $g[a]$. It suffices to show that
            \[\mathbb P[v_i\in C_d[b],\,\forall i\in[d]]\sim I(a,b)\]
        for some constant $C_d$. We proceed by induction on $d$, and note that the case $d=1$ is trivial. First, note that
            \[\mathbb P[v_1\in C_d[b]]\sim\frac{|B_{a_1}\cap[b]|}{|B_{a_1}|}\sim\prod_{j=1}^d\min(a_1\inv b_j,1)\]
        Note that $v_i\in v_1^\perp$ for $i\geq 2$, so it suffices to find the dimensions of $[b]\cap v_1^\perp$. To see why, note that if $(b_2',\cdots,b_d')=\dim([b]\cap v_1^\perp)$, then by the induction hypothesis, we have
            \[\mathbb P[|g[a]\cap[b]|\sim|a|]\sim\prod_{j=1}^d\min(a_1\inv b_j)\prod_{i,j=2}^d\min(a_i\inv b_j',1)=I(a,b)\]
        Let $J$ be the largest value of $j\geq1$ with the property that $a_1\lesssim b_J$. It suffices to show that $b_j'\sim b_j$ for $j>J$ and $b_j'\gtrsim a_1$ for $2\leq j\leq J$. To that end, note that it suffices to show that the dimensions of $B_{a_1}\cap [b]\cap v_1^\perp$, say $c_2,\cdots,c_d$, satisfy $c_j\sim a_1$ for $j\leq J$ and $c_j\sim b_j$ for $j\geq J+1$. The box $B_{a_1}\cap[b]$ is equivalent to a box $\tilde B$ with dimensions $(a_1,a_1,\cdots,a_1,b_{J+1},\cdots,b_d)$. Since $|v_1|=a_1$, we may in fact assume that $v_1$ is a sidelength of $\tilde B$. This implies that $\dim(\tilde B\cap v_1^\perp)=(a_1,\cdots,a_1,b_{J+1},\cdots,b_d)$, as desired. This completes the proof.
    \end{proof}
    \begin{lemma}\label{lemma-intersection-estimate}
        Let $S,T\subset\R^d$ be two symmetric convex sets centered at $0$ with dimensions $s,t$, respectively. Let $g\in SO(d)$ be a uniformly distributed random variable. For any $c\in\mathfrak D$ with $c\leq s\meet t$, let $\chi_c$ denote the event that $c\lesssim \dim(gS\cap T)$. Then we have
            \begin{align}
                            \mathbb P[\chi_c]\lesssim       &\,\frac{I(c,s)I(c,t)}{I(c,c)},\\
                \mathbb P[\chi_{s\meet t}]\sim  &\,I(s\meet t,s\join t),\\
                \mathbb P[|gS\cap T|\gtrsim\lambda\inv|s\meet t|]\lesssim%
                                                &\,(\lambda\log\lambda)\eo dI(s\meet t,s\join t)
            \end{align}
        for $\lambda\geq 2$.
    \end{lemma}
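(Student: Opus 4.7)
My plan is to deduce (ii) and (iii) from (i), and prove (i) by a Markov argument based on \cref{lemma-dyadic-contain-prob}. By John's theorem I replace $S,T$ with their equivalent dyadic boxes $[s],[t]$, absorbing constants into $\lesssim$. Since $[s]$ and $[t]$ are symmetric about $0$, so is $g[s]\cap[t]$, and John applied to this intersection makes $\chi_c$ equivalent (up to constants) to the existence of $h\in SO(d)$ with $h[c]\subset C(g[s]\cap[t])$, a box also centered at $0$.

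For (i), let $Y_g$ denote the Haar measure of $H_g:=\{h\in SO(d):h[c]\subset C(g[s]\cap[t])\}$. I establish \emph{(a)} $Y_g\gtrsim I(c,c)$ whenever $H_g\neq\emptyset$, and \emph{(b)} $\mathbb{E}_g[Y_g]\sim I(c,s)\,I(c,t)$. For (a), fix $h_0\in H_g$; writing $h=h_0\rho$, the condition $\rho[c]\subset C'[c]$ forces $h\in H_g$ with an enlarged constant. The Haar measure of $\{\rho:\rho[c]\subset C'[c]\}$ is $\sim I(c,c)$ by \cref{lemma-dyadic-contain-prob} (applied with $a=b=c$), after translating the ``maximal-volume intersection'' conclusion there into ``containment up to a dilation'' of two rotated copies of the same box. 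For (b), Fubini together with the uniform distribution of $g^{-1}h$ for fixed $h$ yields
\[\mathbb{E}_g[Y_g]=\int_{SO(d)}\mathbb{1}[h[c]\subset C[t]]\cdot\mathbb{P}_g[g^{-1}h[c]\subset C[s]]\,dh\sim I(c,t)\cdot I(c,s),\]
via two applications of \cref{lemma-dyadic-contain-prob}. Markov's inequality then gives $\mathbb{P}[\chi_c]=\mathbb{P}[Y_g>0]\leq\mathbb{P}[Y_g\gtrsim I(c,c)]\lesssim I(c,s)\,I(c,t)/I(c,c)$.

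For (ii), the upper bound follows from (i) combined with the algebraic identity $I(a,s)\,I(a,t)=I(a,s\meet t)\,I(a,s\join t)$ valid for every $a$. I verify this factor-by-factor: for each $(i,j)$, a case analysis on whether $a_i$ exceeds $s_j$ and $t_j$ shows $\min(a_i^{-1}s_j,1)\min(a_i^{-1}t_j,1)=\min(a_i^{-1}(s\meet t)_j,1)\min(a_i^{-1}(s\join t)_j,1)$. Setting $a=s\meet t$ and cancelling $I(s\meet t,s\meet t)$ gives the desired bound. For the matching lower bound, I construct $g_0\in SO(d)$ that greedily aligns the axes of $[s]$ and $[t]$ in the pattern dictated by the min/max of $(s_i,t_i)$, and verify a Haar neighborhood of $g_0$ of measure $\gtrsim I(s\meet t,s\join t)$ witnesses $\chi_{s\meet t}$. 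For (iii), I union bound over dyadic $c\leq s\meet t$ with $|c|\gtrsim\lambda^{-1}|s\meet t|$: (i) with the identity gives $\mathbb{P}[\chi_c]\lesssim I(c,s\meet t)\,I(c,s\join t)/I(c,c)$, and a factor-by-factor comparison using $c\leq s\meet t$ bounds this by $(|s\meet t|/|c|)^{d-1}I(s\meet t,s\join t)$. Summing the weights $(|s\meet t|/|c|)^{d-1}$ over admissible dyadic $c$ via a dyadic/Abel-type argument yields the $(\lambda\log\lambda)^{d-1}$ factor.

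The main obstacle I foresee is step (a) of (i): honestly replacing the ``volume of $\rho[c]\cap[c]$ is $\sim|c|$'' conclusion of \cref{lemma-dyadic-contain-prob} by a true containment $\rho[c]\subset C'[c]$, which requires a geometric argument specific to rotated copies of the same box. A secondary delicate point is the lower-bound construction in (ii), where the explicit alignment $g_0$ must be chosen compatibly with the min/max pattern of $(s,t)$ so that the resulting Haar neighborhood has the correct measure $I(s\meet t,s\join t)$.
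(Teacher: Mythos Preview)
Your Markov argument for (i) and the union bound for (iii) are essentially the paper's proof. Both obstacles you flag, however, are artifacts of choices the paper simply avoids.

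For (i), the paper never passes to containment: it works directly with the ``volume'' event $\chi_c'=\{|h[c]\cap gS\cap T|\sim|c|\}$ for an independent uniform $h$. Your step~(a) then becomes: if $h_0$ witnesses $\chi_c$ (so $|h_0[c]\cap gS\cap T|\sim|c|$) and $|h[c]\cap h_0[c]|\sim|c|$, then both of these sets sit inside $h_0[c]$ with nearly full measure, so by inclusion--exclusion their intersection---which lies in $h[c]\cap gS\cap T$---still has measure $\gtrsim|c|$. No geometric lemma about rotated copies of the same box is needed, and \cref{lemma-dyadic-contain-prob} applies verbatim to give $\mathbb P\big[|h[c]\cap h_0[c]|\sim|c|\,\big|\,\chi_c\big]\sim I(c,c)$.

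For the lower bound in (ii), the paper extracts it from the same Markov identity rather than constructing any explicit $g_0$. The observation is that $\dim(gS\cap T)\lesssim s\meet t$ holds \emph{unconditionally}, since the intersection sits inside both $gS$ and $T$. Hence on the event $\chi_{s\meet t}$ the body $gS\cap T$ is itself equivalent to $h_0[s\meet t]$, and the event $\chi_c'$ becomes equivalent (up to constants) to $|h[c]\cap h_0[c]|\sim|c|$. This upgrades the conditional probability to a genuine $\sim I(c,c)$ rather than merely $\gtrsim$, and dividing gives $\mathbb P[\chi_{s\meet t}]\sim I(s\meet t,s)I(s\meet t,t)/I(s\meet t,s\meet t)$; your factor-by-factor identity then collapses this to $I(s\meet t,s\join t)$. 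The algebraic comparison you sketch for (iii) is recorded in the paper as a separate lemma (\cref{lemma-alg}).
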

    \begin{proof}
        It suffices to consider the case when $S=[s]$ and $T=[t]$. First, note that
            \[\mathbb P[\chi_c]\sim\mathbb P\big[|S\cap gT\cap h_0[c]|\sim|c|\text{, for some }h_0\in SO(d)\big]
                \label{eqn-chi-c}\]
        When $\chi_c$ occurs, we let $h_0\in SO(d)$ be any rotation that satisfies $|S\cap gT\cap h_0[c]|\sim|c|$. When $\chi_c$ does not occur, we leave $h_0$ undefined.
        
        Let $h\in SO(d)$ denote a uniformly distributed random variable, independent of $g$ and $h_0$, and let $\chi_c'$ denote the event that $\big|h[c]\cap S\cap gT\big|\sim|c|$. By \cref{lemma-dyadic-contain-prob},
            \[\mathbb P[\chi_c']\sim I(c,s)I(c,t)
                \label{eq-401}\]
        We would like a way of measuring $\chi_c$, but currently, we only have the tools to measure $\chi_c'$. At the moment, it is unclear how to relate $\mathbb P[\chi_c']$ to $\mathbb P[\chi_c]$. To see how, first note that if $\chi_c'$ occurs, then necessarily $\chi_c$ will occur as well. That is,  we have
            \[\mathbb P[\chi_c']=\mathbb P[\chi_c'|\chi_c]\mathbb P[\chi_c]
                \label{eq-402}\]
        It is very difficult to calculate $\mathbb P[\chi_c'|\chi_c]$ exactly, since $\chi_c$ only gives partial information about the shape of $gS\cap T$. However, we only need find a lower bound since our aim is to estimate $\mathbb P[\chi_c]$. To that end, note that if $\chi_c$ occurs, then $\chi_c'$ will occur as long as $g_0$ is sufficiently close to $h$. To quantify this, suppose that $\big|h_0[c]\cap h[c]\big|\sim|c|$. If this holds and $\chi_c$ holds, then $\chi_c'$ will hold. That is,
            \[\mathbb P[\chi_c'|\chi_c]\gtrsim\mathbb P[\big|h_0[c]\cap h[c]\big|\sim|c|\big|\chi_c]
                \label{eq-403}\]
        with $\sim$ when $c=s\meet t$. Since $h$ and $h_0$ are independent, \cref{lemma-dyadic-contain-prob} implies
            \[\mathbb P[\chi_c'|\chi_c]\gtrsim I(c,c)\]
        Combining this with \cref{eq-401} and \cref{eq-402} yields
            \[\mathbb P[\chi_c]\lesssim\frac{I(c,s)I(c,t)}{I(c,c)}\]
        with $\sim$ when $c=s\meet t$. Note that $s\meet t=(\min(s_1,t_1),\cdots,\min(s_d,t_d))$ and $s\join t=(\max(s_1,t_1),\cdots,\max(s_d,t_d))$, and so
            \[\mathbb P[\chi_c]\lesssim\frac{I(c,s\meet t)I(c,s\join t)}{I(c,c)}
                \label{eq-404}\]
        with $\sim$ when $c=s\meet t$. This implies that
            \[\mathbb P[\chi_{s\meet t}]\sim I(s\meet t,s\join t)\]
        We would like to simplify \cref{eq-404}. To that end, note that \cref{lemma-alg} implies
            \[\frac{I(c,s\meet t)I(c,s\join t)}{I(c,c)}\leq\frac{I(c,s\meet t)I(s\meet t,s\join t)}{I(s\meet t,c)}\cdot\frac{|c|}{|s\meet t|}=\p{\frac{|c|}{|s\meet t|}}\ieo dI(s\meet t,s\join t)\]
        To finish the proof, note that, for a fixed $\lambda$, there are $\sim(\log\lambda)\eo d$ possible $c\in\mathfrak D$ with $|c|\sim\lambda\inv|a|$ and $c\leq a$.
    \end{proof}

    \begin{lemma}\label{lemma-alg}
        Let $\alpha,\beta,\gamma,\eta\in\mathfrak D$.
            \begin{lemenum}
                \item\label{lemma-alg-1}%
                    We have
                        \[\frac{I(\alpha,\beta)}{I(\beta,\alpha)}=\p{\frac{|\alpha|}{|\beta|}}\ie d
                            \label{eqn-lemma-alg-1}\]
                \item\label{lemma-alg-2}%
                    If $\alpha\join\gamma\leq\beta\meet\eta$, then
                        \[\frac{I(\alpha,\eta)I(\beta,\gamma)}{I(\alpha,\gamma)I(\beta,\eta)}\leq\frac{|\alpha\join\gamma|}{|\beta\meet\eta|}
                            \label{eqn-lemma-alg-2}\]
            \end{lemenum}
    \end{lemma}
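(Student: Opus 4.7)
For part (i), the plan is to compute the ratio factor by factor after swapping dummy indices in $I(\beta,\alpha)$. Each resulting factor is $\min(\alpha_i^{-1}\beta_j,1)/\min(\beta_j^{-1}\alpha_i,1)$, and a two-case check on whether $\alpha_i\leq\beta_j$ shows it always simplifies to $\beta_j/\alpha_i$: in one case the numerator is $1$ and the denominator is $\beta_j^{-1}\alpha_i$; in the other the numerator is $\alpha_i^{-1}\beta_j$ and the denominator is $1$. Multiplying over all $(i,j)$ then yields $\prod_i \alpha_i^{-d}\prod_j\beta_j^{d}=(|\alpha|/|\beta|)^{-d}$, as claimed.

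For part (ii), I would take logarithms and reduce to a termwise inequality. Write $a_i:=\log\alpha_i$, $b_i:=\log\beta_i$, $g_i:=\log\gamma_i$, $e_i:=\log\eta_i$; the hypothesis $\alpha\join\gamma\leq\beta\meet\eta$ becomes $\max(a_i,g_i)\leq\min(b_i,e_i)$ for every $i$ (and, in particular, $a_i\leq b_i$ and $g_j\leq e_j$ for all $i,j$). Set $h(x,y):=\min(y-x,0)$, so that $\log I(\alpha,\beta)=\sum_{i,j}h(a_i,b_j)$ and the desired inequality rewrites as $\sum_{i,j}D_{ij}\leq\sum_i[\max(a_i,g_i)-\min(b_i,e_i)]$, where
\[
D_{ij}:=h(a_i,e_j)+h(b_i,g_j)-h(a_i,g_j)-h(b_i,e_j).
\]

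Two elementary observations then close the argument. First, for each fixed $j$ the auxiliary function $F_j(x):=h(x,e_j)-h(x,g_j)$ simplifies to $(x-g_j)_+-(x-e_j)_+$, which is nondecreasing and piecewise linear in $x$; since $a_i\leq b_i$, this yields $D_{ij}=F_j(a_i)-F_j(b_i)\leq 0$ for every $(i,j)$. Second, on the diagonal $i=j$ the hypothesis forces $h(a_i,e_i)=0$ and $h(b_i,g_i)=g_i-b_i$, and a short case split on the signs of $g_i-a_i$ and $e_i-b_i$ produces the exact equality $-D_{ii}=\min(b_i,e_i)-\max(a_i,g_i)$.

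Combining these, $-\sum_{i,j}D_{ij}\geq -\sum_i D_{ii}=\log|\beta\meet\eta|-\log|\alpha\join\gamma|$, which exponentiates to (ii). The main obstacle is purely bookkeeping: the diagonal identity requires the four-case check above, and one needs to remember that the full (rather than just the coordinatewise) hypothesis is used to eliminate the ``cross'' terms $h(a_i,e_i)$. Beyond that the argument reduces to elementary manipulations of piecewise-linear functions of one variable.
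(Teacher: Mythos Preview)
Your proof is correct and, once one undoes the logarithm, essentially identical to the paper's. For (i) both arguments reduce each factor to $\beta_j/\alpha_i$ (resp.\ $\alpha_i/\beta_j$) and multiply. For (ii) your monotonicity of $F_j$ is exactly the paper's observation that $\xi\mapsto I(\xi,\eta)/I(\xi,\gamma)$ (hence each factor of the cross-ratio) moves monotonically in $\xi$, so the off-diagonal factors are $\le 1$ and can be dropped; your diagonal identity $-D_{ii}=\min(b_i,e_i)-\max(a_i,g_i)$ is the additive form of the paper's direct evaluation of the diagonal product as $|\alpha\vee\gamma|/|\beta\wedge\eta|$. The only difference is cosmetic: you work additively with $h(x,y)=\min(y-x,0)$ and spell out the four-case diagonal check, whereas the paper stays multiplicative and leaves that simplification to the reader.
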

    \begin{proof}
        For (i), write
            \[\frac{I(\alpha,\beta)}{I(\beta,\alpha)}=\prod_{i,j=1}^d\frac{\min(\alpha_i\beta_j\inv,1)}{\min(\alpha_i\inv\beta_j,1)}=\prod_{i,j=1}^d\alpha_i\beta_j\inv\]
        For (ii), note that since $\gamma\leq \eta$,
            \[  \frac{I(\xi,\eta)}{I(\xi,\gamma)}
                =
                \prod_{i,j=1}^d
                    \begin{cases}
                        1,                      &%
                                \xi_i\leq\gamma_j\\
                        \xi_i\inv\gamma_j,   &%
                                \gamma_j\leq\xi_i\leq\eta_j\\
                        \eta_j\inv\gamma_j,   &%
                                \xi_i\geq\eta_j
                        \end{cases}
                \]
        is a decreasing function of $\xi$, so when calculating the cross-ratio \cref{eqn-lemma-alg-2}, we may neglect the terms with $i\neq j$ and obtain
            \[  \frac       {I(\alpha,\eta)%
                                            I(\beta,\gamma)}%
                            {I(\alpha,\gamma)%
                                            I(\beta,\eta)}%
                \leq
                \prod_{i=1}^d
                    \frac   {%
                                            \beta_i\inv\gamma_i}%
                            {\min(1,\alpha_i\inv\gamma_i)%
                                            \min(1,\beta_i\inv\eta_i)}
                =
                \frac       {|\alpha\join\gamma|}%
                            {|\beta\meet\eta|}.
                \]
    \end{proof}


    \subsection{Probabilistic incidence estimates between convex sets and the integer lattice}
    
    \begin{theorem}\label{thm-convex-density}
        Fix $N>0$, and let $g\in SO(N)$ be a uniformly distributed random variable. We have
            \[\mathbb P[|gT\cap\Z^N|\geq K]\lesssim K\ie N\]
        uniformly for every convex subset $T$ with $|T|\lesssim 1$.
    \end{theorem}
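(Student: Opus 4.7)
My plan is to bound the $N$-th moment $\mathbb E[X^N]$ of $X:=|g[t]\cap\Z^N|$ by $O(1)$; Markov's inequality $\mathbb P[X\geq K]\leq K^{-N}\mathbb E[X^N]$ then yields the desired $K^{-N}$ bound. By John's theorem, first reduce to the case $T=[t]$ is a centered symmetric dyadic box with $t_1\geq\cdots\geq t_N$ and $|t|=\prod t_i\lesssim 1$; the case $t_1\lesssim 1$ is trivial since then $|g[t]\cap\Z^N|=O(1)$ unconditionally, so henceforth I assume $t_1\gg 1$.

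Next, I expand
\[
\mathbb E[X^N]=\sum_{(p_1,\dots,p_N)\in(\Z^N)^N}\mathbb P\bigl[g^{-1}p_i\in[t]\ \forall\, i\bigr]
\]
and classify each tuple by the rank $m$ of the affine span of $\{p_i\}$ together with the dyadic dimensions $s=(s_1,\dots,s_m)$ of the John box of the centered configuration inside that span. The containment event forces the $m$-dim configuration to fit (up to rotation) into an $m$-dim ``slab'' of $[t]$, which by a rank-$m$ application of \cref{lemma-intersection-estimate} has probability $\lesssim I(s\wedge t_{|m},s\vee t_{|m})$ where $t_{|m}$ denotes an appropriate $m$-dim restriction of $t$. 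The number of $N$-tuples of prescribed rank and shape is bounded by $(\prod_{i:s_i\geq 1}s_i)^N$ via an elementary lattice-point count in a box of dimensions $s$, subject to the key constraint that the $m$-dim ``direction'' of the configuration must have short lattice representatives (for example, when $m=1$, the primitive line direction $v$ must satisfy $|v|\lesssim t_1/s_1$, else the arithmetic progression cannot fit in $[t]$). Summing via the cross-ratio identities of \cref{lemma-alg} then collapses the total to $\mathbb E[X^N]\lesssim 1$.

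As a sharpness sanity check, in the extremal thin-tube case $t=(R,R^{-1/(N-1)},\dots,R^{-1/(N-1)})$ only rank-$1$ configurations fit in $g[t]$ (any two linearly independent lattice vectors have norm $\geq 1$, yet every perpendicular width of $[t]$ is $<1$), and a direct enumeration over aligned lines gives $\mathbb E[X^N]\sim N|t|\sim 1$, saturating the sharp $K^{-N}$ bound. The main obstacle will be the combinatorial-probabilistic bookkeeping in the middle step: carefully counting $N$-tuples of each rank and dyadic shape while respecting the short-direction constraints, matching each count against the appropriate \cref{lemma-intersection-estimate}-type probability, and verifying that the sum over $(m,s)$ telescopes via \cref{lemma-alg} to $O(1)$ with no logarithmic loss — the tightness of the thin tube leaves essentially no slack. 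A secondary technical issue is the rank-$m$ extension of \cref{lemma-intersection-estimate}, which should follow by restricting the containment event to an $m$-dim coordinate subspace of $[t]$ and invoking the stated lemma in that lower-dimensional ambient.
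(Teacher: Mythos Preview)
Your plan has a fatal flaw: the target bound $\mathbb{E}[X^N]=O(1)$ is \emph{false}. The tail estimate $\mathbb{P}[X\geq K]\lesssim K^{-N}$ sits exactly at the threshold where the $N$-th moment diverges, and for your own sanity-check example --- the thin tube $t=(R,R^{-1/(N-1)},\dots,R^{-1/(N-1)})$ --- one has $\mathbb{E}[X^N]\gtrsim\log R$, not $O(1)$. To see this, restrict your moment expansion to rank-one tuples $(k_1v,\dots,k_Nv)$ with $v$ primitive. By convexity and symmetry of $[t]$ the joint event is $\{Mv\in g[t]\}$ with $M=\max_i|k_i|$, and for $|v|=r$ one computes $\mathbb{P}[Mv\in g[t]]\sim R^{-1}(Mr)^{-(N-1)}$ whenever $1\leq Mr\leq R$. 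Summing over the $\sim M^{N-1}$ tuples with a given $M$ and then over $M\leq R/r$ gives a per-line contribution $\sim r^{-N}$; summing over primitive $v$ with $|v|\leq R$ then yields
\[
\sum_{v}|v|^{-N}\sim\int_1^R r^{-N}\cdot r^{N-1}\,dr=\log R.
\]
Each dyadic scale of $|v|$ contributes $\sim 1$ and the scales accumulate, so your claimed enumeration ``$\mathbb{E}[X^N]\sim N|t|\sim 1$'' is incorrect. Markov on the $N$-th moment therefore loses a logarithm in the aspect ratio of $T$ and cannot recover the sharp $K^{-N}$ tail.

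The paper avoids moments entirely. It conditions on the rank-$M$ sublattice $\mathcal P_g\subset\Z^N$ \emph{generated} by $gT\cap\Z^N$, uses Blichfeldt's inequality to convert the point count into a volume lower bound $|gT\cap\spn_\R\mathcal P_g|\gtrsim K\cdot\text{cov}(\mathcal P_g)$, and then for each fixed sublattice $\mathcal P$ of covolume $\sim l$ bounds $\mathbb{P}\big[|gT\cap\spn_\R\mathcal P|\gtrsim lK\big]$ via \cref{lemma-intersection-estimate}. The crucial point is that this subspace-intersection probability carries a small polynomial gain beyond $K^{-N}$ --- in the paper, a factor $\lambda^{0.99}$ in the auxiliary parameter $\lambda=lK\prod_{j\leq M}t_j^{-1}\lesssim 1$ --- which converts the would-be harmonic sum over dyadic covolumes $l$ and the sublattice count $\alpha(l,M)\lesssim l^N$ into a convergent geometric sum. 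Your moment expansion has no such gain available, precisely because the $N$-th moment is the critical one.
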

\begin{proof}
        Assume that $T$ has dimensions $(t_1,\cdots,t_N)$, and let $K_g=|gT\cap\Z^N|$ be an integer-valued random variable. Consider the sublattice $\mathcal P_g\subset\Z^N$ generated by $|gT\cap\Z^N|$, and suppose that $\dim(\mathcal P_g)=M$. Note that the convex hull of $gT\cap\Z^N$ lies in the $M$-dimensional vector space $\spn_\R(\mathcal P_g)$ and has $M$-volume $\leq|gT\cap\spn_\R(\mathcal P_g)|$, so it can be partitioned into $\geq K_g-M$ many $M$-simplices, each of volume $\geq(M!)\inv\text{cov}(\mathcal P_g)$, where cov$(\mathcal P_g)$ denotes the covolume of $\mathcal P_g$. 
        Therefore, we have
            \[|gT\cap\spn_\R(\mathcal P_g)|\geq\frac{K_g-M}{M!}\text{cov}(\mathcal P_g)\gtrsim K_g\cdot \text{cov}(\mathcal P_g)\]\def\cov{\text{cov}~}
        for $K_g$ sufficiently large, i.e. $K_g\geq N+1$.\footnote{This elementary  result is due to Blichfeldt \cite{blichfeldt1921notes}. We thank David Conlon and Huy Pham for pointing this to us.} Thus, we have
            \[\mathbb P[K_g\geq K]\lesssim\sum_{1\leq M \leq N} \sum_{\text{dyadic }l\in(1, \infty)} \sum_{\mathcal{P}: \text{cov}(\mathcal P)\sim l} \mathbb P[|\spn_\R(\mathcal P)\cap gT|\gtrsim lK_0]
                \label{eqn-sum-over-dyadics}\]
        Let $\delta>0$ be a small dyadic number and let $\mathcal P_\delta$ be the dyadic box equivalent to $B_{\delta\inv}\cap\mathcal N_\delta(\spn_\R(\mathcal P))$, and denote by $s=(s_1,\cdots,s_N)$ the dimensions of $\mathcal P_\delta$. Assume that $\delta$ is sufficiently small, so that
            \[|\spn_\R(\mathcal P)\cap gT|\sim\delta\ie M|\mathcal P_\delta\cap gT|\]
        for all possible values of $g$. Note that $s\meet t=(t_1, \ldots, t_M, \delta,\cdots,\delta)$, so \cref{lemma-intersection-estimate} implies that
            \[\mathbb P[|\spn_\R(\mathcal P)\cap gT|\gtrsim lK_0]\lesssim\p{lK_0\prod_{j=1}^M t_j\inv}^{-N+0.99}I(s\meet t,s\join t)\]
        Note that
            \[I(s\meet t,s\join t)=\prod_{i=1}^M\prod_{j=M+1}^Nt_i\inv t_j=|T|^M\p{\prod_{j=1}^M t_j\inv}^N\]
        Therefore, it follows that
            \[\mathbb P[|\spn_\R(\mathcal P)\cap gT|\gtrsim lK]\lesssim(lK)\ie N\lambda^{0.99}
                \label{eqn-subspace-vol}\]
        where $\lambda=lK\prod_{j=1}^M t_j\inv$. Note that we need only consider the case when $\lambda\lesssim1$, since the largest possible intersection of $T$ with an $M$-dimensional subspace has volume $\sim t_1 \cdots t_M$.

        Next, we would like to compute the number of possible $M$-dimensional $\mathcal P$ with $\cov(\mathcal P)\sim l$. Let $\alpha(l,M)$ denote the number of $M$-dimensional $\mathcal P$ with $\cov(\mathcal P)\sim l$. We will show by induction on $M$ that $\alpha(l,M)\lesssim l^N$. For each $M$-dimensional $\mathcal P$, let $\mathcal P'\subset\mathcal P$ denote the codimension-$1$ sublattice of $\mathcal P$ with minimum covolume, say $\cov(\mathcal P')\sim l'$. Note that, since $\mathcal P$ admits an almost-orthogonal set of generators, we have $l'\lesssim l\ef{M-1}M$.
        
        Since $\mathcal P$ is determined uniquely by $\mathcal P'$ and a generator of $\mathcal P/\mathcal P'\subset\Z^N/\mathcal P'$, we have
            \[\alpha(l,M)\lesssim\sum_{\text{dyadic }l'\in(1,l\ef{M-1}M)}\alpha(l',M-1)l^{N-M+1}(l')^{M-N}\]
        To see where the factor $l^{N-M+1}(l')^{M-N}$ comes from, note that any representative $x$ of the coset generating $\mathcal P/\mathcal P'$ must lie a distance at most $\frac l{l'}$ away from $\spn_\R(\mathcal P')$. Furthermore, the $x$ may be chosen to project into the fundamental domain $F'$ of $\mathcal P'$, i.e. $\pi_{\spn_\R(\mathcal P')}(x)\in F'$. Thus, $x$ lies in a box with volume $\p{\frac l{l'}}^{N-M+1}l'=l^{N-M+1}(l')^{M-N}$  since any possible generator of $\mathcal P/\mathcal P'$ must lie in $\Z^N\cap\mathcal N_{l/l'}(\spn_\R(\mathcal P'))$, which leaves $\lesssim\p{\frac l{l'}}^{N-M+1}(l')=l^{N-M+1}(l')^{M-N}$ choices. By our induction hypothesis, we conclude
            \[\alpha(l,M)\lesssim\sum_{\text{dyadic }l'\in(1,l\ef{M-1}M)}(l')^Nl^{N-M+1}(l')^{M-N}=\sum_{\text{dyadic }l'\in(1,l\ef{M-1}M)}l^N\cdot\frac{(l')^M}{l^{M-1}}\lesssim l^N\]
        Combining this with \cref{eqn-subspace-vol}, we deduce
            \[\sum_{\text{dyadic }l\in(1, \infty)} \sum_{\mathcal{P}: \text{cov}(\mathcal P)\sim l} \mathbb P[|\spn_\R(\mathcal P)\cap gT|\gtrsim lK_0]\lesssim\sum_{\text{dyadic }\lambda\in(0,1)}K_0\ie N\lambda^{0.99}\lesssim K_0\ie N\]
        This, along with \cref{eqn-sum-over-dyadics}, completes the proof.        
\end{proof}

\begin{remark}
    \cref{thm-convex-density} gives an interesting example of the Kakeya maximal function. Recall that for a positive function $F$ on $\R^N$, a $\delta \in (0, 1)$ and a direction $v \in S^{N-1}$, the Kakeya  maximal function  \[\mathcal{K}_{\delta} F (v) = \sup_{T: \delta \times \cdots \times \delta \times 1-\text{tube in direction }v} \frac{1}{|T|}\int_T F.\] The Kakeya Maximal Conjecture \cite{bourga1991besicovitch} predicts \[\|\mathcal{K}_{\delta} F\|_{L^N (\mathbb S^{N-1})} \lessapprox_N \|F\|_{L^N (\R^N)}.\] This conjecture is known in dimension $2$ but is widely open in all higher dimensions. Recently Wang-Zahl \cite{kakeya-3d} made a  breakthrough towards this conjecture by proving the related \emph{Kakeya set conjecture} in dimension $3$. We refer the readers to \cite{kakeya-3d} for history and  recent progress.

    Since \cref{thm-convex-density} is also a natural $L^N$ estimate, we are curious to see if it is related to a Kakeya maximal estimate. Let us take $F_0 = 1_X$ where $X$ is a union of $\delta$-balls around points in $\delta^{\frac{N-1}{N}}\Z^N$ inside the unit ball. Then the statistics of $\sup_{T: \delta \times \cdots \times \delta \times 1-\text{tube in direction }v}\int_T F_0$ reduces to counting the intersection between $\delta \times \cdots \times \delta \times 1$ tubes with $\delta^{\frac{N-1}{N}}\Z^N$. By rescaling, this corresponds to estimating the intersection between $\delta^{\frac{1}{N}} \times \cdots \times \delta^{\frac{1}{N}} \times \delta^{-\frac{N-1}{N}}$-tubes with $\Z^N$. These tubes have volume $\sim 1$ and we can use \cref{thm-convex-density}. If $T$ is a fixed $\delta^{\frac{1}{N}} \times \cdots \times \delta^{\frac{1}{N}} \times \delta^{-\frac{N-1}{N}}$-tube, then \cref{thm-convex-density} gives \[\|\mathcal{K}_{\delta} F_0\|_{L^N (\mathbb S^{N-1})} \sim \|\delta I_T\|_{L^N (SO(N))}\lessapprox \delta.\]
    On the other hand, \[\|F_0\|_{L^N (\R^N)}\approx \delta^{\frac{1}{N}}.\]
    In conclusion, \cref{thm-convex-density} is an $L^N$ Kakeya maximal estimate for the lattice-based function $F_0$ in any dimension $N$.  This estimate does not follow from the $L^N$ Kakeya maximal conjecture in $\R^N$ and is much stronger. This is because we are looking at a very special and interesting $F_0$.
\end{remark}
    \def\GHM{\mathrm{GHM}}
    \def\smallscale{\rho}

    \begin{corollary}\label{cor-convex-density}
        Let $R>1$, and let $V\subset\R^N$ be a uniformly distributed random $n$-dimensional subspace of $\R^N$. For any $\kappa\in(0,1)$, we have with probability at least $\kappa$ the uniform estimate
            \[\frac{|S\cap \Z^N|}{|S|}\lesssim_{N, \kappa} R^{O\p{\frac{n^2}N}}.
                \label{eqn-cor-density-physical}\]
        where $B_{R\inv}^N\subsetsim S\subsetsim B_R^N$ is a cylinder of any height over any convex subset in $V$ with $|S|\geq 1$.
    \end{corollary}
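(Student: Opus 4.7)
The plan is to discretize the admissible family of cylinders into finitely many dyadic shape classes, apply (an extension of) Theorem \ref{thm-convex-density} within each class to obtain a moment bound, and combine via Markov's inequality. I interpret ``a cylinder of any height over a convex subset of $V$'' as $S = B + h B^{N-n}_{V^\perp}$ with $B\subset V$ convex, $h>0$ a scalar height, and $B^{N-n}_{V^\perp}$ the unit ball in $V^\perp$; the isotropy of the cross-section in $V^\perp$ is structurally essential to avoid a prohibitive union bound in that direction. By John's theorem (as in the beginning of \cref{subsec-shape}), I would replace $B$ by a rectangular box with dyadic sidelengths $a_1\geq\cdots\geq a_n$ in some orientation within $V$, and $h$ by a dyadic value. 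The constraints $|S|\gtrsim 1$ and $S\subsetsim B_R^N$ force $a_i,h\in[R^{-1},R]$, yielding $\lesssim (\log R)^{n+1}$ dyadic shape classes.

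For each dyadic shape and each orientation of $B$ within $V$ — parameterized by a $\delta$-net in $SO(V)\cong SO(n)$ with $\delta=R^{-2}$ (of size $\lesssim R^{O(n^2)}$), after slightly enlarging $T$ by an $R^{-2}$-neighborhood to absorb perturbations within each net cell with only $O(1)$ loss in volume since $a_i\geq R^{-1}$ — I would write $V=gV_0$ for $g\in SO(N)$ uniformly distributed and a fixed reference subspace $V_0$, so that the cylinder becomes $gT_0$ for a fixed $T_0\subset\R^N$ with $|T_0|\gtrsim 1$. An extension of Theorem \ref{thm-convex-density} to $|T_0|\gtrsim 1$ (either by adapting the proof with $K$ replaced by $K|T_0|$, or by tiling $T_0$ into unit-volume pieces and applying the theorem to each, together with Jensen) gives the tail bound
\[
\mathbb{P}_g\!\left[\frac{|gT_0\cap\Z^N|}{|T_0|}\geq K\right]\lesssim K^{-(N-O(1))},
\]
hence a uniform bound on a near-$N$-th moment of $|gT_0\cap\Z^N|/|T_0|$. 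Setting $X(V):=\sup_S |S\cap\Z^N|/|S|$ with supremum over the admissible family and summing the moment bounds over dyadic shapes and net orientations yields $\mathbb{E}_V[X(V)^{N-O(1)}]\lesssim_N R^{O(n^2)}$.

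Markov's inequality then gives $\mathbb{P}_V[X(V)\geq K]\lesssim R^{O(n^2)}/K^{N-O(1)}$, which is $\leq 1-\kappa$ for $K=R^{O(n^2/N)}$ with the implicit constant depending on $N$ and $\kappa$, proving the corollary. The main obstacle in this plan is the union bound over orientations of $B$ within $V$: a net of size $\sim R^{O(n^2)}$ in $SO(V)$ is required to capture all orientations up to constant-factor changes in lattice counts, and the near-$N$-th tail of Theorem \ref{thm-convex-density} is just tight enough to absorb this cost and produce the sub-polynomial exponent $O(n^2/N)$. Crucially, the isotropy of the cross-section $h B^{N-n}_{V^\perp}$ in $V^\perp$ removes any analogous union bound in $V^\perp$ that would otherwise cost $R^{O((N-n)^2)}$ and could not be absorbed — this is the structural reason $S$ is assumed to be a cylinder in the statement.
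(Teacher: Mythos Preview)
Your proposal is essentially correct and arrives at the same $R^{O(n^2/N)}$ loss, but it handles the orientation freedom within $V$ differently from the paper. The paper avoids your explicit $\delta$-net in $SO(V)$ by a conditioning trick: for a fixed dyadic shape $s$, it lets $\chi_g(s)$ be the event that \emph{some} orientation $b\in SO(n)$ gives $|gb[s]\cap\Z^N|\geq K$, then introduces an independent uniform $h\in SO(n)$ and observes (via \cref{lemma-dyadic-contain-prob}) that $\mathbb P[|gh[s]\cap\Z^N|\gtrsim K\mid \chi_g(s)]\gtrsim I(s',s')$. Combined with \cref{thm-convex-density} applied to the \emph{fully random} $gh$, this gives $\mathbb P[\chi_g(s)]\lesssim K^{-N}I(s',s')^{-1}$, after which one only sums over dyadic shapes $s'$ and uses the crude bound $I(s',s')\gtrsim R^{-2n^2}$. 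Your approach replaces this amplification step by a brute-force union bound over an $R^{-O(1)}$-net in $SO(n)$ of size $R^{O(n^2)}$; the loss is the same order, and the $N$-th-power tail from \cref{thm-convex-density} is exactly what is needed to absorb it in either case. Your approach is more direct and makes the source of the $n^2$ transparent (it is literally $\dim SO(n)$), while the paper's conditioning argument is cleaner and avoids the bookkeeping of net perturbations.

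Two small points to tidy. First, a rotation by angle $\delta=R^{-2}$ moves points in a box of diameter $\lesssim R$ by up to $R^{-1}$, not $R^{-2}$, so your absorbing neighborhood should be of width $R^{-1}$; since $a_i\gtrsim R^{-1}$ this still only costs $O_N(1)$ in volume, so the argument is unaffected. Second, you should say a word about reducing to $S$ centered at the origin (the paper does this by passing to $S-S$, which at most squares the lattice count and preserves the cylinder structure); without this, the translation of $S$ is an extra parameter not captured by your shape-plus-orientation discretization.
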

        \begin{proof}
        Suppose that $V=g\R^n$ where $g\in SO(N)$ is a uniformly distributed random variable. We assume that $S$ is equivalent to a box $gh_0[s]$, for some $h_0\in SO(n)$ and some $s=(s_1,\cdots,s_N)$. This requires that $S$ is centered at the origin, which we may assume since, if $S$ is not centered at the origin, then the sumset $S-S$ is centered at the origin and contains at least as many lattice points as did $S$.

        Furthermore, we may assume without loss of generality that $|s|=1$, since if $|s|>1$, then we may tile $S$ with convex sets of unit volume; the density of $\Z^N$ in $S$ cannot exceed the density of $\Z^N$ on every tile, and hence the result when $|s|>1$ follows from the result when $|s|=1$.
        
        Next, we fix $K>0$, and let $\chi_g(s)$ denote the event that $|\Z^N\cap gb[s]|\geq K$ for some $b\in SO(n)$. It suffices to show that $\mathbb P(\chi_g(s))<1-\kappa$ for some $K\sim_{N, \kappa}R^{O\p{\frac{n^2}N}}$.

        If $h\in SO(n)$ is a uniformly distributed random variable independent of $g$ and $b$, note that
            \[\mathbb P[|\Z^N\cap ghS|\gtrsim K\mid\chi_g(s)]\gtrsim\mathbb P[|gh_0S\cap ghS|\sim|S|]\sim I(s',s')\]
        by \cref{lemma-dyadic-contain-prob}, if $s'$ denotes the dimensions of $S'=S\cap V$. On the other hand, by \cref{thm-convex-density} we have
            \[\mathbb P[\chi_g(s)]\mathbb P[|\Z^N\cap ghS|\gtrsim K\mid\chi_g(s)]\lesssim K\ie N.\]
        Therefore, we have
            \[\mathbb P[\chi_g(s)]\lesssim K\ie N(I(s',s'))\inv.\]    
        This implies that it suffices to show that
            \[\sum_sI(s',s')\inv\lesssim R^{O(n^2)}\]
        where the sum of over all $s'$ with $(R\inv,\cdots,R\inv)\lesssim s'\lesssim(R,\cdots,R)$. This is a crude estimate and we use a crude bound on $I(s',s')$, i.e. $I(s',s')\gtrsim R^{-2n^2}$ since each of the $n^2$ factors in the definition of $I(s',s')$ is at least $\frac{R\inv}R$.
       \end{proof}
\section{The construction of the counterexample}\label{counterexamplesection}
\subsection{Some preliminary constructions}
    \def\smallscale{\rho}
    With the above preparation, we prove \cref{powerblowupthm} in this section. We will prove a generalization that also fits in the setting of \cref{subsec-codim}.
    
    Let $n\geq2$ be a dimension and  $k\geq 2$ be an integer. We will prove the following:
    \begin{theorem}\label{thm-counterexample}
         Let $1\leq m<n$ be integer dimensions, let $l\in[1,n-1]$ be another integer dimension, and let $k\geq2$. Let $\Sigma$ be any $m$-dimensional compact $C^k$ submanifold of $\R^n$, parameterized by some $s:[0,1]^m\to\R^n$. Then, for any $\smallscale>0$, there exists a perturbation $\Sigma'$ of $\Sigma$, given as the graph of some $s':[0,1]^m\to\R^n$ satisfying $\pl D^\gamma(s-s')__\infty\leq\smallscale$ for all indices $|\gamma|<k$, that satisfies
            \[\p{C_{\Sigma,l}'(R)}^2\geq  R^{\alpha-\eps}\]
         for every $\eps>0$ and every  $R\geq O_\eps(1)$, where $\alpha=\frac{lm}{m+k(n-m)}$.
    \end{theorem}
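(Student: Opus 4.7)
The plan is, for each scale $R$, to build a positive-definite weight $w=|\hat h|^2$ where $h$ is supported on the projection of the integer lattice $\Z^N\subset\R^N$ onto a Haar-random $n$-plane $V\subset\R^N$, and then to choose a small $C^{k-1}$-perturbation $\Sigma'$ of $\Sigma$ that meets many points of this projected lattice. Concretely, fix a large integer $N=N(\varepsilon)$ and set $\mu:=R^{l}$, $\delta:=R^{-l/(m+k(n-m))}$, $\rho_0:=R^{l/(N-n)}$, so that $\rho_0^{N-n}=\mu$ and an ambient box of dimensions $\delta^m\times\delta^{k(n-m)}$ has $n$-volume $1/\mu$. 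Let $L_0:=\pi_V(\Z^N\cap(B_1^V\times B_{\rho_0}^{V^\perp}))\subset B_1^n$, so that $|L_0|\sim\mu$. Define $h(y):=\sum_{\xi\in L_0}\dd_{R^{-1}}(y-\xi)$ and $w:=|\hat h|^2$. Applying Poisson summation in $\R^N$ to the truncated exponential sum $\sum_{\eta\in\Z^N\cap C}e^{-2\pi i x\cdot\eta}$ identifies $\hat h|_{B_R}$ with $\mu\sum_{\eta\in L_0'}\hat\bb_1(\cdot-\eta)$, where $L_0':=\pi_V(\Z^N\cap(B_R^V\times B_{\rho_0^{-1}}^{V^\perp}))$ is the Fourier-dual projected configuration; hence $\|w\|_1\sim\mu R^n$ and $\int_T w\sim\mu^2|L_0'\cap T|$ for every $l$-tube $T$.

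For the maximal function, apply \cref{cor-convex-density} to the cylinder $S=T\times B_{\rho_0^{-1}}^{V^\perp}\subset\R^N$: since $|S|=R^l/\mu=1$, its uniform estimate gives, with probability $\ge 1/2$ over $V$, $|L_0'\cap T|\lesssim R^{O(n^2/N)}$ simultaneously for every $l$-tube $T\subset B_R^n$, hence $\|\mathcal M_l w\|_\infty\lesssim\mu^2 R^{O(n^2/N)}$. For a matching lower bound on lattice incidences on $\Sigma'$, partition $[0,1]^m$ into $\delta^{-m}$ subcubes; by Taylor's theorem each maps to a piece of $\Sigma$ inside an ambient box of dimensions $\delta^m\times\delta^{k(n-m)}$, so that the expected $L_0$-count per box is exactly $1$. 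The moment bounds $\mathbb E|\Z^N\cap gB|^p\lesssim 1$ for $p<N$ obtained from \cref{thm-convex-density} then yield a Chebyshev-type lower tail on the total $|L_0\cap\mathcal N_{\delta^k}(\Sigma)|$, so on a positive-probability event a constant fraction of the boxes contains at least one point of $L_0$. In each such box one locally perturbs $\Sigma$ by a bump of normal height $\lesssim\delta^k$ over tangent width $\delta$; since $\|D^j\cdot\|_\infty\sim\delta^{k-j}$ for such a bump, $\|s-s'\|_{C^{k-1}}\lesssim\delta\le\rho$ once $R$ is large, and after perturbation $|L_0\cap\mathcal N_{R^{-1}}(\Sigma')|\gtrsim\delta^{-m}=R^\alpha$.

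Putting these together, $\|\hat w\|_{L^2(\Sigma')}^2\sim R^\alpha\cdot\mu^2 R^{2n-m}$ (each lattice incidence contributes $(\mu R^n)^2 R^{-m}$ since $h*\tilde h\sim\mu R^n$ on the difference set $L_0-L_0$), whereas $R^{n-l-m}\|w\|_1\|\mathcal M_l w\|_\infty\sim R^{2n-l-m}\mu^3 R^{O(n^2/N)}$; \cref{initialreductionlem} then gives $(C_{\Sigma',l}'(R))^2\gtrsim R^{\alpha-O(n^2/N)}\ge R^{\alpha-\varepsilon}$ once $N\gg n^2/\varepsilon$. To promote this scale-by-scale construction to a single $\Sigma'$ working at every $R\ge R_\varepsilon$, I would use a Baire-type argument: for each pair $(R,\varepsilon)$ the set of $s'$ realizing the above counterexample is $C^{k-1}$-open (small perturbations preserve $w$ up to negligible error in $\|\hat w\|_{L^2(\Sigma')}$), and dense in the $\rho$-ball around $s$ by the construction, so a countable intersection over dyadic $R_j$ and $\varepsilon_j\to 0$ remains $G_\delta$-dense and provides the required $\Sigma'$. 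The delicate step is the probabilistic embedding of lattice points in $\Sigma'$: the threshold $\mu=R^l$ saturates the hypothesis $|S|\ge 1$ of \cref{cor-convex-density}, leaving no slack, so the lower-tail moment estimate must be extracted tightly from \cref{thm-convex-density}, and the local perturbations have to be produced in all $\delta^{-m}$ pieces consistently with the moving tangent frame of $\Sigma$ and the $C^{k-1}$ budget.
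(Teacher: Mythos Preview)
Your core construction is the paper's own: a positive-definite weight $w=|\hat h|^2$ built from the projection of a high-rank lattice $\Z^N$ onto a random $n$-plane, together with a local $C^k$-bump perturbation of $\Sigma$ on $\delta$-caps so that $\Sigma'$ meets $\gtrapprox R^\alpha$ projected lattice points. The parameter choices, the appeal to \cref{cor-convex-density} for the maximal-function bound, and the final arithmetic all match the paper (modulo harmless normalization). Your ``Poisson summation'' identification of $\hat h$ with a dual projected configuration is heuristic; the paper sidesteps this by defining the weight $W_R$ directly on $\R^N$ and restricting to $V$, which is cleaner.

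Two genuine gaps. First, your lower bound on lattice incidences with $\Sigma$: you propose a ``Chebyshev-type lower tail on the total $|L_0\cap\mathcal N_{\delta^k}(\Sigma)|$'' via moment bounds. The paper does not do this, and it is not clear your route works, since the per-box counts $|S_\theta\cap L_0|$ share the same random $V$ and are highly correlated. The paper instead shows (i) $\mathbb E|S_\theta\cap L_0|\sim 1$ for \emph{each} box and (ii) with high probability \emph{every} box has $|S_\theta\cap L_0|\lesssim R^\varepsilon$ (a uniform use of \cref{cor-convex-density}); combining these forces $\gtrsim R^{\alpha-\varepsilon}$ boxes to be nonempty. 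Relatedly, you omit the random dilation by a factor in $[1,2]$ that the paper explicitly builds into $\iota$; without it, step (i) can fail (e.g.\ when $\Sigma$ is a sphere centered at the origin, a Haar rotation alone does not randomize the radial placement of lattice points), as the paper's footnote warns.

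Second, your passage from the scale-by-scale statement to a single $\Sigma'$ is genuinely different from the paper's. The paper picks a sparse sequence $R_j=2^{2^{j^{0.99}}}$ and assigns to each $R_j$ a disjoint shrinking patch of $\Sigma$, perturbing independently on each patch; the slow-growth condition $\log_{R_j}R_{j+1}\to 1$ then transfers the bound to all $R$. Your Baire argument can in principle be made to work, but as written it has a circularity: the open set of ``good'' $s'$ at scale $R$ is only well-defined once a weight $w_R$ is fixed, yet the random $n$-plane (hence $w_R$) is chosen in the construction to be good \emph{for a particular starting surface}. You would need to define the open dense set as $\{s':\exists\,w\text{ witnessing }(C'_{\Sigma',l}(R))^2\ge R^{\alpha-\varepsilon}\}$, and then check density by first approximating an arbitrary $C^{k-1}$ element by a $C^k$ one before applying the construction; you should also note that a Baire limit in $C^{k-1}$ need not be $C^k$, whereas the paper's disjoint-patch construction manifestly produces a $C^k$ surface.
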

    \begin{corollary}
        There exists a strictly convex, $C^2$ hypersurface $\Sigma\subset\R^n$,  such that for every $\eps > 0$, for all sufficiently large $R>0$ one can find $f\in L^2(\Sigma)$ and $w:\R^n\to[0,1]$ supported in $B_R$ such that
            \[\int_{B_R}|\ext f(x)|^2w(x)dx\geq  R^{\frac{n-1}{n+1}-\eps}\pl f__2{\Sigma}^2\pl\mathcal M(w)__\infty\]
    \end{corollary}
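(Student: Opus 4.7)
The plan is to derive the corollary as the specialization of \cref{thm-counterexample} to the case $m=n-1$ (hypersurface), $l=1$ (Kakeya tubes), and $k=2$, combined with \cref{initialreductionlem}. With these parameters, $\alpha=\frac{lm}{m+k(n-m)}=\frac{n-1}{n+1}$ matches the exponent in the corollary, and the normalization factor $R^{n-l-m}$ in the definition of $C_{\Sigma,l}(R)$ collapses to $R^0=1$, so $C_{\Sigma,1}(R)$ is exactly the loss in the Mizohata--Takeuchi inequality.

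First I would pick a smooth, strictly convex base hypersurface $\Sigma$, for instance a sufficiently curved piece of a sphere, parameterized as a graph by some $s:[0,1]^{n-1}\to\R^n$. Applying \cref{thm-counterexample} to $\Sigma$ with a small parameter $\rho>0$ produces a $C^2$ perturbation $\Sigma'$, the graph of some $s'$ with $\|D^\gamma(s-s')\|_\infty\leq\rho$ for $|\gamma|<2$, such that
\[(C_{\Sigma',1}'(R))^2\geq R^{\frac{n-1}{n+1}-\eps/2}\]
for all $\eps>0$ and all sufficiently large $R$. To ensure that $\Sigma'$ remains strictly convex I would inspect the construction underlying \cref{thm-counterexample}: the perturbation is built by small local displacements inside boxes whose dimensions are controlled powers of $R\inv$, and one can verify that its second derivatives are bounded by a universal constant independent of $R$. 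Starting from a base $\Sigma$ whose principal curvatures comfortably exceed this constant then guarantees that the second fundamental form of $\Sigma'$ stays uniformly positive definite.

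Combining the lower bound on $(C_{\Sigma',1}'(R))^2$ with \cref{initialreductionlem} yields $C_{\Sigma',1}(R)\gtrapprox R^{\frac{n-1}{n+1}-\eps/2}$. By the definition of $C_{\Sigma',1}(R)$ as the smallest constant in the corresponding MT-type inequality, there must exist some $f\in L^2(\Sigma')$ and some non-negative weight $w$ supported in $B_R$ with
\[\int_{B_R}|\ext_{\Sigma'}f(x)|^2w(x)\,dx\gtrsim R^{\frac{n-1}{n+1}-\eps/2}\|f\|_{L^2(\Sigma')}^2\|\mathcal M w\|_\infty.\]
Rescaling $w$ by $\|w\|_\infty$ gives a weight taking values in $[0,1]$ (both sides of the inequality are linear in $w$), and absorbing the implicit constant into the exponent by enlarging $R$ slightly yields the stated bound with loss $R^{\frac{n-1}{n+1}-\eps}$.

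The main obstacle is really contained in \cref{thm-counterexample} itself. Within this proof of the corollary, the subtle step is the strict convexity verification: since \cref{thm-counterexample} as stated provides only $C^{k-1}=C^1$ closeness, preserving convexity requires extracting additional quantitative information about the $C^2$ norm of the perturbation produced by the underlying construction, rather than appealing to the theorem statement alone.
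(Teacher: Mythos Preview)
Your proposal is correct and matches the paper's approach: the corollary is stated immediately after \cref{thm-counterexample} with no separate proof, the intended derivation being exactly the specialization $m=n-1$, $l=1$, $k=2$ (giving $\alpha=\frac{n-1}{n+1}$ and $R^{n-l-m}=1$) together with \cref{initialreductionlem} to pass from $C_{\Sigma',1}'$ back to the Mizohata--Takeuchi constant.

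On strict convexity there is a minor difference in viewpoint. The paper relies on the stronger claim (stated in \cref{powerblowupthm} and in the last paragraph of the proof of \cref{lemma-technical}) that the perturbation can be made arbitrarily small in the $C^k$ topology, so any strictly convex starting surface remains strictly convex. You instead work from \cref{thm-counterexample} as literally stated (only $|\gamma|<k$ closeness) and recover what you need by looking inside \cref{prop-counterexample} to see that $\|D^k t_R\|_\infty\le 1$, then choosing a sufficiently curved base. Both routes are valid; yours is arguably more self-contained relative to the statement you are citing.
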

    Notably, $\alpha=\frac{n-1}{n+1}$ matches the exponent in \cite{ciw-mt-24}. Thus, the constant in the MT conjecture for $C^2$ hypersurfaces must be $R\ef{n-1}{n+1}$, up to the endpoint.

    To lessen the technical load, reduce the result to the following proposition, allowing us to fix $\eps,R$:
    \begin{proposition}\label{prop-counterexample}
        For any $s_0:[0,1]^m\to\R^n$ parameterization of any $m$-dimensional $C_k$ submanifold $\Sigma_0$ with $\pl D^\gamma(s_0)__\infty\lesssim1$ for every index $|\gamma|\leq k$, any $\eps > 0$, and any number $R\gtrsim1$, there exists:
        \begin{enumerate}
            \item Some $t_R:\R^m \to\R^n$ supported on $[0, 1]^m$ with $\pl t_R__\infty\lesssim1$ and $\pl D^\gamma(t_R)__\infty\leq1$ for all indices $|\gamma|\leq k$.
            \item Some weight $w_R:B_R^n\to[0,1]$ such that
                \[\pl w_R__1{B_R}\lesssim R^{n-l},\]  \[ \sup_{U\subset\R^n:\dim U=l}\int_Uw_Rd\lambda_l\lesssim_\eps R^{\eps}\]
            and
                \[\pl\hat w_R__2{\Sigma_R;d\sigma}^2\gtrsim R^{-\eps} R^{n-m-l}R^\alpha\pl w_R__1,\]
            where $\Sigma_R$ denotes the graph of the function $s_0+t_R$, $d\sigma$ is the corresponding surface measure, and $\alpha=\frac{lm}{m+k(n-m)}$.
        \end{enumerate}
    \end{proposition}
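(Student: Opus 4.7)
The plan is to construct $w_R$ of the form $|\hat h|^2\,\bb_R$, where $h$ is an $L^1$-normalized approximation to a projected higher-rank lattice, and then to build $t_R$ cell-by-cell so that the perturbed surface $\Sigma_R$ intersects many lattice points. Fix a large constant $N = N(\eps)$ (ultimately $N \gg n^2/\eps$), draw a uniformly random rotation $g \in SO(N)$, and let $\pi_n\colon \R^N \to \R^n$ be the projection onto the first $n$ coordinates. Set $\rho = R^{-l/N}$ and $\rho' = R^{-1}$, and define the projected truncated lattice
\[
    L := \pi_n\bigl(g(\rho\,\Z^N \cap B_1^N)\bigr) \subset B_1^n,
\]
so $K := |L| \sim \rho^{-N} = R^l$ (for generic $g$ the projection is injective on the truncation). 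Let $\phi_{\rho'}$ be a nonnegative $L^1$-normalized bump on scale $\rho'$ with $\|\hat\phi_{\rho'}\|_\infty \leq 1$, and put $h := K^{-1}\sum_{\xi\in L}\phi_{\rho'}(\,\cdot\,-\xi)$ and $w_R := |\hat h|^2\,\bb_R$. Then $|\hat h(\omega)| \leq K^{-1}\cdot K\cdot|\hat\phi_{\rho'}(\omega)| \leq 1$ gives $w_R \in [0,1]$, and Plancherel yields $\|w_R\|_1 \approx \|\hat h\|_2^2 = \|h\|_2^2 \sim K^{-1}\rho'^{-n} = R^{n-l}$.

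For the $l$-plane Kakeya bound, Parseval gives $\int_U w_R\,d\lambda_l \approx \int_{U^\perp}(h*\tilde h)\,d\lambda_{n-l}$ for each $l$-plane $U$. A bump of $h*\tilde h$ centered at a difference $\xi - \xi'$ contributes $\sim \rho'^{-l}$ to this integral if its center is within $O(\rho')$ of $U^\perp$. By translation, the number of such pairs is bounded by $K\cdot |L\cap\mathcal N_{\rho'}(U^\perp)|$. Lifting $\mathcal N_{\rho'}(U^\perp)\cap B_1^n$ to a cylinder in $\R^N$ of volume $\sim \rho'^l\cdot \rho'^{N-n}$ and applying \cref{cor-convex-density} bounds this count by $R^{O(n^2/N)} \leq R^\eps$. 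Combined, $\int_U w_R\,d\lambda_l \lesssim K^{-2}\rho'^{-l}\cdot K \cdot R^\eps = R^\eps$ uniformly in $U$.

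For the perturbation, set $\beta = l/(m+k(n-m))$, so $m\beta = \alpha$. Partition $[0,1]^m$ into $\sim R^{m\beta}$ sub-cubes $Q_c$ of side $R^{-\beta}$. By $C^k$-Taylor expansion each piece $s_0(Q_c)$ lies in a box $B_c \subset \R^n$ with $m$ tangent sides of length $\sim R^{-\beta}$ and $n-m$ normal sides of length $\sim R^{-k\beta}$; this box has $n$-volume $\sim R^{-\beta(m+k(n-m))} = R^{-l}$. The expected count $K\cdot |B_c| \sim 1$ together with the uniform upper bound from \cref{cor-convex-density} (applied after a random extra translation of the lattice) provides a realization in which at least a fixed positive fraction of the cubes satisfy $B_c \cap L \neq \emptyset$. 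For each such cube, pick $\xi_c \in B_c \cap L$ and let $t_R$ equal a smooth bump on $Q_c$ of tangent support $\sim R^{-\beta}$ and normal size $\lesssim R^{-k\beta}$, chosen so the graph of $s_0 + t_R$ over $Q_c$ passes through $\xi_c$. This forces $\|D^\gamma t_R\|_\infty \lesssim R^{-k\beta}\cdot R^{\beta|\gamma|} \leq 1$ for all $|\gamma| \leq k$.

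Finally, $\Sigma_R$ passes through $K' \gtrsim R^{m\beta} = R^\alpha$ points of $L \subset L - L$. Because $L$ is the projection of a genuine lattice, a difference $d \in L - L$ with $|d| \leq 1/2$ typically has multiplicity $m(d) \sim K$ (most $\xi \in L$ still satisfy $\xi - d \in L$), so $(h*\tilde h)(d) \sim K^{-2}\cdot K\cdot \rho'^{-n} = K^{-1}\rho'^{-n}$ at each such point. Integrating $|\hat w_R|^2$ over a $\rho'$-piece of $\Sigma_R$ around each of the $K'$ lattice points gives
\[
    \|\hat w_R\|_{L^2(\Sigma_R)}^2 \gtrsim K'\cdot (K^{-1}\rho'^{-n})^2 \cdot \rho'^m = R^{m\beta - 2l + 2n - m} = R^{n-m-l+\alpha}\cdot R^{n-l},
\]
which is the claimed bound. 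The main technical obstacle is the simultaneous probabilistic control: the Kakeya upper bound and the per-cell lower bound on lattice points both depend on the same random rotation $g$, and we need a single realization on which both hold. The upper bound from \cref{cor-convex-density} is uniform over all relevant convex bodies with high probability, while the lower bound is a total-count statement holding on average; a union bound over the finite family of test configurations then produces a suitable $g$, modulo the already-noted extra random translation.
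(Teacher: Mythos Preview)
Your approach is essentially the paper's: build $h$ from a projected truncated lattice in $\R^N$, take $w_R=|\hat h|^2$, control the $l$-plane maximal function via \cref{cor-convex-density}, and perturb $\Sigma_0$ cell by cell so that $\Sigma_R$ hits $\gtrapprox R^\alpha$ lattice points. Your Fourier-slice route to the Kakeya bound is dual to the paper's direct dual-lattice count, but equivalent. The final $\|\hat w_R\|_{L^2(\Sigma_R)}^2$ computation is correct, and your remark about needing a single realization of $g$ satisfying both the uniform upper bound and the average lower bound is exactly how the paper proceeds.

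There is one genuine geometric slip. You write that by ``$C^k$-Taylor expansion each piece $s_0(Q_c)$ lies in a box $B_c$ with $n-m$ normal sides of length $\sim R^{-k\beta}$.'' This is false for $k>2$: the second-order term already forces the image to have normal extent $\sim R^{-2\beta}\gg R^{-k\beta}$, regardless of higher smoothness. The correct object is not a box containing $s_0(Q_c)$ but the $R^{-k\beta}$-\emph{neighborhood} of $s_0(Q_c)$: that neighborhood has the volume $R^{-\beta(m+k(n-m))}=R^{-l}$ you want, and a lattice point in it is reachable by a normal perturbation of size $\lesssim R^{-k\beta}$, which is exactly what your $C^k$ bound on $t_R$ allows. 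You clearly understand the perturbation constraint (your derivative estimate is correct), so this is a mislabeling rather than a missing idea.

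That correction exposes a second issue (which the paper also glosses over): for $k>2$ the $R^{-k\beta}$-tube around the curved piece $s_0(Q_c)$ is not convex, so \cref{cor-convex-density} does not apply to it directly. The clean fix is to subdivide each $Q_c$ into sub-cubes of side $R^{-k\beta/2}$; over each sub-cube the image has normal extent $\sim R^{-k\beta}$ by the second-order bound, so its $R^{-k\beta}$-thickening \emph{is} comparable to a convex box. Summing, the total volume is unchanged and the density bound from \cref{cor-convex-density} goes through.

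Two smaller points: in the Kakeya step your lifted cylinder has volume $\sim\rho'^{\,l}$, not $\rho'^{\,l}\cdot\rho'^{\,N-n}$ (the $N-n$ orthogonal directions contribute full unit length), so after rescaling by $\rho^{-1}$ the volume is $\sim 1$ as needed; and the pair count should be $K\cdot\max_{x_0}|L\cap\mathcal N_{\rho'}(U^\perp+x_0)|$ over affine translates, which the uniform statement of \cref{cor-convex-density} still handles.
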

    Intuitively, \cref{prop-counterexample} states that, given any $C^k$ hypersurface $\Sigma_0$, there is some surface $\Sigma'$ within $R\ief k{m+k(n-m)}$ of $\Sigma_0$ so that $C_{\Sigma,l}(R)\gtrapprox R\ef{lm}{m+k(n-m)}$.
    
    \begin{lemma}\label{lemma-technical}
        \cref{prop-counterexample} implies \cref{thm-counterexample}.
    \end{lemma}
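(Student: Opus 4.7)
The plan is to construct $\Sigma'$ by a multi-scale gluing of rescaled copies of the counterexamples from Proposition \ref{prop-counterexample}. Fix $\rho>0$. Choose a sparse sequence $R_1<R_2<\cdots\to\infty$ and small parameter-space radii $\delta_j>0$ with $\sum_j\delta_j\leq\rho$ such that the values of $R$ produced by the rescaling below cover all of $[C_\varepsilon,\infty)$, together with disjoint closed balls $B_j=B(x_j,\delta_j)\subset[0,1]^m$. After a local rotation at $x_j$ writing $\Sigma$ as a graph $s(x)=(x,f(x))$, define the rescaled Taylor remainder
\begin{equation*}
s_0^{(j)}(y):=\bigl(y,\,\delta_j^{-k}[f(x_j+\delta_j y)-T_{x_j}^{<k}f(\delta_j y)]\bigr),\qquad y\in[0,1]^m,
\end{equation*}
where $T_{x_j}^{<k}f$ denotes the Taylor polynomial of $f$ at $x_j$ of degree $<k$. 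By Taylor's theorem $\|D^\gamma s_0^{(j)}\|_\infty\lesssim 1$ for $|\gamma|\leq k$, so $s_0^{(j)}$ is admissible input to Proposition \ref{prop-counterexample}. Applying the Proposition at scale $R_j$ with accuracy $\varepsilon/2$ produces $t^{(j)}$ and $w^{(j)}$; by modifying by a translation we may and do arrange that the tangential component of $t^{(j)}$ vanishes. Define
\begin{equation*}
s'(x):=s(x)+\sum_j\delta_j^k\,\chi_j\bigl((x-x_j)/\delta_j\bigr)\,t^{(j)}\bigl((x-x_j)/\delta_j\bigr)
\end{equation*}
with cutoffs $\chi_j\in C_c^\infty$. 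The disjointness and the amplitude factor $\delta_j^k$ yield $\|D^\gamma(s'-s)\|_\infty\lesssim\delta_j^{k-|\gamma|}\leq\rho$ for $|\gamma|<k$, the required $C^{k-1}$-closeness.

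Given $R\ge O_\varepsilon(1)$, pick the index $j$ matching $R$ to the scale produced below, and introduce the anisotropic polynomial diffeomorphism
\begin{equation*}
\Phi_j(x_T,x_N):=\bigl(\delta_j^{-1}(x_T-x_j),\,\delta_j^{-k}[x_N-T_{x_j}^{<k}f(x_T-x_j)]\bigr).
\end{equation*}
A direct computation using the definitions of $s_0^{(j)}$ and $s'$ (and the vanishing of the tangential part of $t^{(j)}$) verifies the exact identity $\Phi_j(\Sigma'_{\mathrm{local}})=\Sigma_{R_j}$, and also $\Phi_j(s(x_j))=0$, so that no undesired phase is introduced. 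Set $w:=w^{(j)}\circ\Phi_j$, which is real, $[0,1]$-valued, and supported in the anisotropic neighborhood $\Phi_j^{-1}(B_{R_j})$ of $s(x_j)$ whose diameter is $\sim R$ with the parameter matching. Using the surface-level identity $\Phi_j(\Sigma'_{\mathrm{local}})=\Sigma_{R_j}$ as a change of variables in the outer surface integral gives, up to Jacobian factors $\delta_j^{-(m+k(n-m))}$, a reduction of $\|\hat w\|_{L^2(\Sigma'_{\mathrm{local}})}^2$ to the Proposition's lower bound on $\|\widehat{w^{(j)}}\|_{L^2(\Sigma_{R_j})}^2$. Combined with the standard scaling bounds $\|w\|_1\lesssim \delta_j^{m+k(n-m)}\|w^{(j)}\|_1$ and $\|\mathcal M_lw\|_\infty\lesssim\delta_j^{m+k(n-m)}\|\mathcal M_lw^{(j)}\|_\infty$ (the latter because $\Phi_j^{-1}$ contracts in every direction, so any pullback of a unit tube fits inside a single unit tube), exponent bookkeeping and the substitution of $R=R(R_j,\delta_j)$ produce $(C_{\Sigma',l}'(R))^2\gtrsim R^{\alpha-\varepsilon'}$ for any $\varepsilon'>C\varepsilon$.

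The main obstacle is that $\Phi_j$ contains a nonlinear polynomial shear (of degree $<k$), so the Fourier transform of $w=w^{(j)}\circ\Phi_j$ does \emph{not} factor as a clean modulation and rescaling of $\widehat{w^{(j)}}$: a direct substitution produces an extra polynomial phase $e^{-2\pi iT_{x_j}^{<k}f(\delta_ju_T)\cdot\xi_N}$ inside the Fourier integrand, which has no simple Fourier-side interpretation. The crucial observation that lets the argument go through is that this phase is exactly the one coming from the shear built into the parameterization of $\Sigma'_{\mathrm{local}}$, so when the outer integral $\|\hat w\|^2_{L^2(\Sigma'_{\mathrm{local}})}$ is computed by parameterizing $\Sigma'_{\mathrm{local}}$ by the natural coordinate $y\in[0,1]^m$, the polynomial phase cancels against the shear in the surface parameterization and the integral collapses to the Proposition's quantity on $\Sigma_{R_j}$. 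Carrying this cancellation out carefully, along with the anisotropic Kakeya bookkeeping, is the technical heart of the argument.
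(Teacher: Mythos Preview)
Your argument has a genuine gap at the ``crucial observation''. After the substitution $z=\Phi_j^{-1}(u)$, the phase in $\hat w(\xi)$ picks up the term $P(\delta_j u_T)\cdot\xi_N$ with $P=T_{x_j}^{<k}f$. For $k\ge3$ this is \emph{nonlinear in the inner integration variable} $u_T$, so no reparameterization of the \emph{outer} surface variable can bring the total phase into the linear form $u\cdot\eta+c(\xi)$ that would be needed for the $u$-integral to collapse to $\widehat{w^{(j)}}(\eta)$. Parameterizing $\Sigma'_{\mathrm{local}}$ by $y$ only affects $\xi$; it cannot touch the $u$-dependence of the integrand, so there is nothing for the shear in the surface parameterization to cancel against. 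There is also a covariant/contravariant mismatch underlying your setup: even when $\Phi_j$ is affine (the case $k=2$), mapping the surface by $\Phi_j$ forces the weight to be precomposed with the \emph{inverse transpose} of the linear part, not with $\Phi_j$ itself, so $w=w^{(j)}\circ\Phi_j$ is not the right object. Your Kakeya bookkeeping has the same direction error: $\int_T w=\delta_j^{m+k(n-m)}\int_{\Phi_j(T)}w^{(j)}$, and $\Phi_j$ \emph{expands} by factors $\delta_j^{-1},\delta_j^{-k}$, so $\Phi_j(T)$ is a thick curved slab, not something that fits in a single unit $l$-tube.

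The paper bypasses the entire anisotropic/shear apparatus with a much simpler device. It takes disjoint patches of side $\delta_j\sim 4^{-j}$ and scales $R_j=2^{2^{j^{0.99}}}$, so that $\delta_j=R_j^{-o(1)}$. An \emph{isotropic} linear rescaling of each patch by $\delta_j^{-1}$ produces an admissible input to the Proposition; one applies it at scale $R_j$ and rescales back. All Jacobian, tube-geometry, and surface-measure factors incurred are polynomial in $\delta_j$, hence $R_j^{o(1)}$, and are simply absorbed into the $\gtrapprox$. Because $\log_{R_j}R_{j+1}\to1$, the bound at the discrete sequence of scales $R_j$ then propagates to every $R$ with only an $R^{\varepsilon}$ loss. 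The point is that precise scale matching is unnecessary: choosing $\delta_j$ subpolynomially small in $R_j$ makes all the bookkeeping you are attempting disappear.
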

    \begin{proof}
        Consider a sequence of scales $R_j=2^{2^{j^{0.99}}} (j \geq 1)$. We construct the perturbation of $\Sigma$ based on these scales. For all $j$, we can find a patch on $\Sigma$ with side length $\leq \frac{1}{10} 4^{-j}$ and can make all patches found this way disjoint. We perturb $\Sigma$ on each patch individually. That is, for each patch, we first rescale it to a graph of $[0, 1]^m$ and use \cref{prop-counterexample} to obtain a perturbation and rescale back (since $\frac{1}{10} 4^{-j} = R_j^{-o(1)}$, we can afford to lose $R_j^{o(1)}$ along the way of rescaling). Therefore, we have
            \[{C_{\Sigma,l}'(R_j)}^2\gtrapprox  R_j^{\alpha}.\label{gtrapproxforRj}\]
        
        Next, note we have the slow growth condition $\log_{R_j}(R_{j+1})=2^{(j+1)^{0.99}-j^{0.99}}\to 1$ as $j\to\infty$, hence \cref{gtrapproxforRj} can be extended to all scales $R>1$.

        Finally, note that the rescaling implies that the perturbation go to zero in the $C^k$ topology, so we can ensure that $\Sigma'$ is arbitrarily close to $\Sigma$ by ignoring the first several scales $R_j$.
    \end{proof}

    Finally we prove \cref{prop-counterexample}.
    \subsection{The proof of \cref{prop-counterexample}}
    
         Let $N\gg n$ be a sufficiently large dimension, depending on $\eps$, and let $\kappa\in(0,1)$ be fixed (can be chosen to be $99\%$ for example).  Let $\mathcal L=R\ief lN\Z^N$ and let $\mathcal L^*$ be the dual lattice $R\ef lN\Z^N$. Next, define a weight $W_R(x):\R^N\to[0,\infty)$ by
            \[W_R(x)=\Big|(\dd_1*\delta_{\mathcal{L}^*})\bb_R\Big|^2
                \label{def-W-R}\]
     Let $V\subset\R^N$ be a random $n$-dimensional subspace of $\R^N$, and let $\iota:\R^n\to V$ be a random linear isometry multiplied by a uniformly distributed random variable in $[1,2]$.\footnote{The random rescaling here is necessary, as this ensures the transforms of $\Sigma$ to hit many points with uniform probability, which ultimately guarantees \cref{geomintersection} below. Without the rescaling, \cref{geomintersection} will not follow from elementary geometry when $\Sigma$ is, e.g., a sphere centered at the origin.}
     Let
            \[w_R:=W_R\circ\iota
                \label{def-w-R}\]
        be a weight on $\R^n$. That is, $W_R$ looks like a rescaled integer lattice in $\R^N$ and $w_R$ is the restriction to a random $n$-dimensional subspace. Note that $W_R=|\hat H_R|^2$, where $H_R$ is given as
            \[H_R(x)=R\ie l\dd_{R\inv}*\bb_1\delta_{\mathcal L}
                \label{def-H-R}\]
        Similarly, $w_R=|\hat h_R|^2$, where $\hat h_R:=\hat H_R\circ\iota$.
        
        We pause to control $\pl w_R__1$. Note that $\pl w_R__1\sim|\mathcal N_1(V)\cap\mathcal L^*\cap B_R|$ by the definition of $W_R$. Therefore, $\pl w_R__1\sim|\Z^N\cap V'|$, where $V'$ is the convex set $\mathcal N_{R\ief lN}(V\cap B_R)$ with $|V'|\sim R^{n-l}$. Recall that $V$ is a random variable whose orientation is uniformly distributed in $SO(N)$, so we may apply \cref{thm-convex-density} to prove a probabilistic upper bound on $|\Z^N\cap V'|$, i.e.
            \[|\Z^N\cap V'|\lesssim_{\kappa'}|V'|\]
        with probability at least $\kappa'$.  We will use some more probabilistic arguments of this flavor later on, and can  set now $\kappa'=0.9+0.1\kappa>\kappa$, to give extra room for error.  Furthermore, note that  $V'$ must contain $\gtrsim |V'|$ points in $\Z^N$. To see why, note that a random translation of the set $\frac12V'$ contains an average of $2\ie N|V'|$ points in $\Z^N$; in particular, some translation of $x_0+\frac12V'$ contains a set $L_0$ with $|L_0|\geq 2\ie N|V'|$, so we have $L_0-L_0\subset\frac12V'-\frac12V'=V$ by the convexity of $V$. In summary, we have proved 
        
            \[\pl w_R__1\sim_{\kappa'} R^{n-l}
               \label{eqn-l1-norm-w-R}\]
        with probability at least $\kappa'$.

                
        
        Next, note that by the projection-slicing theorem we have
            \[h_R=R\ie l\dd_{R\inv}*\sum_{v\in\mathcal L}\bb_1(\iota^t v)\delta_{\iota^t (v)}\]
        where $\iota^t$ denotes the transpose of $\iota$.\footnote{Strictly speaking, so far we have two $\phi_{R\inv}$ in dimensions $N$ and $n$. We slightly abuse the notation by taking the measure $\phi_{R\inv} dx$ in dimension $n$ to be the pushforward of  the measure $\phi_{R\inv} dx$ in dimension $N$.} Next, note that
            \[\hat w_R=h_R*\tilde h_R=R^{-2l}\dd_{R\inv}*\dd_{R\inv}*\sum_{v_1,v_2\in\mathcal L}\bb_1(\iota^t  v_1)\bb_1(\iota^t v_2)\delta_{\iota^t(v_1-v_2)}\gtrsim |\mathcal L_0|R^{-2l+n}1_{\mathcal N_{R\inv}(\iota^t(\mathcal L_0))},
                \label{rapid-decay-difference}\]
        where $\mathcal L_0=\mathcal L\cap B_1$, since for any $v\in \mathcal L_0$, there are $\sim|\mathcal L_0|$ solutions to $v_1-v_2=v$. In \cref{subsec-construct-p}, we will construct $t_R$ so that $|\iota^t(\mathcal L_0)\cap\Sigma_R|\gtrapprox R^\alpha$. In this case, for any $\xi\in\mathcal N_{R\inv}(\Sigma_R\cap\iota^t(\mathcal L_0))$, we have
            \[\hat w_R(\xi)\gtrsim R^{-2l+n}|\mathcal L_0|\sim R^{n-l}.\]
            
        Thus,
            \[d\sigma\Big(\hat w_R\inv\big([R^{n-l},\infty)\big)\Big)\gtrapprox R^\alpha R\ie m.\]
            
        Hence, $\|\hat w_R\|_{L^2 (\Sigma)}^2\gtrapprox R^\alpha R\ie mR^{2n-2l}\sim R^\alpha R^{n-l-m}\pl w_R__1$.

        Note that \cref{cor-convex-density} implies that, with probability at least $\kappa'$, we have 
            \[\sup_U\int_Uw_Rd\lambda_l\lesssim_\kappa R^{O_n(N\inv)},\]
        where $U$ is allowed to be any $l$-dimensional affine subspace of $\R^n$. To see why, note that the left-hand side is $\sim |\mathcal L^*\cap\mathcal N_1(\iota(U))| \sim|\Z^N\cap \iota(U')|$, where $U'=R\ief lN(B_R\cap\mathcal N_1(U))$. Now $U'$ lies in a cylinder of height $R\ief lN$ over a convex set contained in $V$ with $|U'|\sim R\ie l|U|\sim 1$. We can apply \cref{cor-convex-density} to deduce
            \[\pl \mathcal M_l(w)__\infty\lesssim R^{O_n(N\inv)}\lesssim R^{\eps}\]
        for $N$ sufficiently large.
        It remains to show that $\Sigma_R$ can be chosen so that $|\iota^t(\mathcal L_0)\cap\Sigma_R|\gtrapprox R^\alpha$ with probability at least $\frac{1+\kappa}2$.

    \subsection{The construction of  $\Sigma_R$.}\label{subsec-construct-p}
    Now, we would like to show that we may construct $\Sigma_R$ (abbreviated as $\Sigma$ henceforth) so that $|\iota^t(\mathcal L_0)\cap\Sigma|\gtrapprox R^\alpha$, where $\alpha=\frac{lm}{m+k(n-m)}$.

    Recall that we are given a starting $m$-manifold $\Sigma_0$, parameterized by a function $s_0:[0,1]^m\to\R^n$, and we would like to find some function $s':[0,1]^m\to\R^n$ so that $\Sigma$ such that the image of $s=s_0+s'$ contains $\gtrapprox R^\alpha$ points in $\iota^t(\mathcal L_0)$.  Furthermore, we must choose $s'$ so that all its derivatives up to order $k$ are bounded.

    First, we partition $[0,1]^m$ into a set $\{U_\theta:\theta\in\Theta\}$ of $R^{m\beta}$ many cubes, each of side length $R\ie\beta$, for some $\beta\leq\frac12$ that will be chosen later. For each $\theta\in\Theta$, $s_0(U_\theta)$ is contained in a slab $S_\theta$ of dimensions $(R\ie\beta)^{\times m}\times(R\inv)^{\times(n-m)}$.

    On each square $U_\theta$, we would like to perturb $s_0$ by an amount $a_\theta$. That is, let $s_\theta=a_\theta t_\theta$, where $t_\theta=\bb_{R\ie\beta}\circ\tau_\theta$ is a bump function on $U_\theta$ and $\tau_\theta$ denotes the translation from $[0,R\ie\beta]$ to $U_\theta$. We then choose $s'=\sum_{\theta\in\Theta}s_\theta$.

    Since $s_\theta$, and all of its derivatives have support $\subsetsim U_\theta$, it suffices to show that its derivatives up to order $k$ are all bounded. For this, note that if $|\gamma|=j$, then $\frac{\partial^j\bb_{R\ie\beta}}{\partial\xi_1^{\gamma_1}\cdots\partial\xi_{n-1}^{\gamma_{n-1}}}\lesssim R^{\beta j}$. Therefore, we require $a_\theta\lesssim R\ie{\beta k}$, in order for the derivatives of $s_{\theta}$ to remain bounded.

    Let $S_\theta'$ denote the $R\ie{\beta k}$-thickening of $S_\theta=s_0(U_\theta)$. For each $\theta\in\Theta$, we say that $\theta$ is \textit{good} if $S_\theta'$ contains at least one point in $\mathcal L$. Note that $|S_\theta'|\sim R^{-m\beta-(n-m)\beta k}=R^{-\beta(m+(n-m)k)}$, and $|\iota^t(\mathcal L_0)|\sim R^l$, so if we choose $\beta\geq\frac l{m+(n-m)k}=\frac\alpha m$, then
        \[\mathbb E|S_\theta\cap\iota^t(\mathcal L_0)|\gtrsim1\]
    By \cref{cor-convex-density}, if we set $\beta=\frac\alpha n$, then with probability at least $\frac{1+\kappa}2$, we claim that there are $\gtrsim_\kappa R^\alpha R\ie\eps$ many good $\theta\in\Theta$. To see why, note that
        \[|S_\theta\cap\iota^t(\mathcal L_0)|=|B_R\cap(\iota^t)\inv(S_\theta)\cap R\ief lN\Z^N|=\abs{B_{R\ef{l+N}N}\cap R\ef lN(\iota^t)\inv(S_\theta)\cap\Z^N}\]
    and $B_{R\ef{l+N}N}\cap R\ef lN(\iota^t)\inv(S_\theta)$ satisfies the hypotheses of \cref{cor-convex-density}, so we have with probability at least $\frac{1+\kappa}2$ that $|S_\theta\cap\iota^t(\mathcal L_0)|\lesssim R^\eps$, since $N$ is sufficiently large. On the other hand, for a fixed $\theta$, we have by elementary geometry considerations, \[\mathbb E[|S_\theta\cap\iota^t(\mathcal L_0)|]\sim|S_\theta||\iota^t(\mathcal L_0)|\sim 1.\label{geomintersection}\] 
    
    Therefore,
        \[\mathbb E\b{\abs{\bigcap_{\theta\in\Theta}S_\theta}\cap\iota^t(\mathcal L_0)}\sim|\Theta|\sim R^\alpha\]
    This means that the number of good theta is $\gtrsim_{\kappa,\eps}R^{\alpha-\eps}$. This completes the proof.

\section{$C^k$ hypersurfaces through many lattice points}\label{richsurfacessection}
Using \cref{thm-convex-density} and the idea in \cref{counterexamplesection}, we now rigorously present the way to construct hypersurfaces with many lattice points as mentioned in \cref{Latticediscussionsec}.

\begin{theorem}\label{sharplatticethm}
    For any dimension $n\geq2$, any $C^2$ hypersurface $\Sigma$,  any $\eps>0$ and any  scale $R\gtrsim_{\Sigma, \eps} 1$, there is an $\eps$-small $C^2$-perturbation $\Sigma_R '$ of $\Sigma$  that contains $\gtrsim_{\Sigma, \eps} R^{n-2+\frac2{n+1}}$ points in the lattice $\p{\frac1R\Z}^n$.
    
\end{theorem}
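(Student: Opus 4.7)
The plan is to adapt the slab construction in \cref{subsec-construct-p} to the lattice $L=(\tfrac{1}{R}\Z)^n$ in $\R^n$. I take $m=n-1$ and $k=2$ and set the patch-scale parameter $\beta=n/(n+1)$; the target exponent then rewrites as $n-2+\tfrac{2}{n+1}=n(n-1)/(n+1)=(n-1)\beta$, which will be the number of patches used.

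After a finite partition of unity, WLOG $\Sigma$ is the graph of a $C^2$ function $s_0:[0,1]^{n-1}\to\R$ with bounded derivatives, lying in a unit ball. I would randomize by a uniform random rotation $g\in SO(n)$ and a uniform random translation $t\in[0,1]^n$ (equivalently, a random rigid motion of $L$). Partition $[0,1]^{n-1}$ into $|\Theta|=R^{(n-1)\beta}=R^{n-2+2/(n+1)}$ cubes $U_\theta$ of side $R^{-\beta}$. The $C^2$ bound on $s_0$ confines $s_0(U_\theta)$ to a rectangular slab $S_\theta$ of dimensions $\sim(R^{-\beta})^{\times(n-1)}\times R^{-2\beta}$. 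A smooth bump supported on $U_\theta$ of amplitude $a$ has second derivative of size $aR^{2\beta}$, so any $a\leq\eps R^{-2\beta}$ yields an $\eps$-small $C^2$ perturbation. Thickening $S_\theta$ normally by $\eps R^{-2\beta}$ gives a slab $S_\theta'$ of volume $\sim\eps R^{-(n+1)\beta}=\eps R^{-n}$; any lattice point in $S_\theta'$ lying over $U_\theta$ can be realized as a point on the perturbed graph by a single-bump correction.

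Setting $Y_\theta=|S_\theta'\cap L|$, the density $|L\cap B_{O(1)}|\sim R^n$ gives $\mathbb E[Y_\theta]\sim\eps$ and $\mathbb E[\sum_\theta Y_\theta]\sim\eps|\Theta|$. To control the distribution of the $Y_\theta$'s, I would apply \cref{thm-convex-density} after rescaling by $R$, under which $L\mapsto\Z^n$ and $RS_\theta'$ becomes a convex set of volume $\lesssim 1$: this yields $\mathbb P[Y_\theta\geq K]\lesssim K^{-n}$, and hence $\mathbb E[Y_\theta\,\mathbf 1_{Y_\theta\geq K}]\lesssim K^{1-n}$. Summing, the expected number of lattice points in \emph{heavy} slabs ($Y_\theta\geq K$) is $\lesssim|\Theta|K^{1-n}$. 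Crucially, $K$ depends only on $\eps$ and $n$, so fixing $K=K(\eps,n)$ large enough makes this bound $\leq\tfrac{1}{4}\eps|\Theta|$. With positive probability the total $\sum_\theta Y_\theta\geq\tfrac{1}{2}\eps|\Theta|$ while heavy slabs contribute $\leq\tfrac{1}{4}\eps|\Theta|$, so the number of ``light-populated'' slabs is $\geq\tfrac{\eps}{4K}|\Theta|\gtrsim_{\eps,n}R^{n-2+2/(n+1)}$.

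Finally, for each such good $\theta$, I would fix one $p_\theta\in S_\theta'\cap L$ and add a single bump $b_\theta$ on $U_\theta$ of amplitude $\rho_\theta\in[0,\eps R^{-2\beta}]$ and spatial scale $R^{-\beta}$, chosen so the graph of $s_0+\sum_\theta b_\theta$ passes through $p_\theta$. The disjointness of the $U_\theta$ implies the $C^2$-norm of $\sum_\theta b_\theta$ is the supremum over $\theta$, hence $\leq\eps$. The main obstacle is precisely the probabilistic step above: one must avoid an $R^\eps$-loss from naively union-bounding the per-slab upper bound from \cref{thm-convex-density} across the $\sim R^{O(1)}$ slabs. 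The summable tail $K\mapsto K^{-n}$ lets us replace this by a first-moment argument on heavy slabs, incurring only a constant-order loss depending on $\eps$ and $n$, and thereby giving the exact exponent $R^{n-2+2/(n+1)}$ rather than merely $\gtrapprox$.
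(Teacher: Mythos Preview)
Your proof is correct and follows the same approach as the paper: slab decomposition at scale $R^{-n/(n+1)}$, \cref{thm-convex-density} with $N=n$ to bound the lattice points per slab, and a local bump perturbation on the populated slabs. The only difference is cosmetic---you randomize by an explicit rigid motion and run a first-moment argument on the heavy slabs, whereas the paper first perturbs so the Gauss map is an immersion and uses the separated cap-normals together with a random translation; both avoid the $R^\eps$ loss and hit the sharp exponent.
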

    
\begin{proof}
    First, divide $\Sigma$ into $R^{n-2+\frac2{n+1}}=R\ef{n(n-1)}{n+1}$ many caps $S_\theta$, each of diameter $\sim R\ief n{n+1}$, and let $S_\theta$ be the slab with dimensions $\sim R\ief n{n+1}\times\cdots\times R\ief n{n+1}\times R\ief{2n}{n+1}$ that contains $S_\theta$. Next, note that each of the boxes $S_\theta$ are equivalent up to a rotation; furthermore, we may assume without loss of generality that the unit normal vectors to each of the slabs $S_\theta$ are separated by a distance $\gtrsim_\eps R\ief n{n+1}$; this is possible by replacing $\Sigma$ with some $C^2$ perturbation whose Gauss map is an immersion.
    
    Let $S_\theta'$ denote a translation of $S_\theta$ that is centered at the origin. Let us fix $K$ and say that $S_\theta'$ is \textit{good} if it contains $\leq K$ points in $\p{\frac1R\Z}^n$. By \cref{thm-convex-density}, we conclude that for some $K\lesssim_n1$, at least $\frac12$ of the total number of $\theta$ are good. Let $\mathcal S$ be the union of all good $S_\theta$. Assume without loss of generality that $\mathcal S$ contains $\gtrsim|\mathcal S|\sim R^{n-2+\frac2{n+1}}$ many points in $\p{\frac1R\Z}^n$; this is possible by requiring $\eps\gtrsim R\inv$ and replacing $\Sigma$ with a translated copy $\Sigma+O(\eps)$.

    Finally, we conclude that there are $\gtrsim R^{n-2+\frac2{n+1}}$ many caps $S_\theta$ which contain at least $1$ point in $\p{\frac1R\Z}^n$, and so we complete the proof by perturbing $\Sigma$ locally on each such cap as in the proof of \cref{prop-counterexample}.

\end{proof}

\begin{remark}
    A classical result of Andrews \cite{andrews1961asymptotic} asserts any strictly convex $C^2$ compact hypersurface $\Sigma \subset \R^n$ must contain $\lesssim M^{n-2 + \frac{2}{n+1}}$ points in $(\frac{1}{M}\Z)^n$. This bound can be recovered by the recent decoupling approach of Kiyohara \cite{kiyohara2024lattice}. Our construction in \cref{sharplatticethm} matches this bound and shows it is sharp in all dimensions.
    
    In dimension $2$, Jarn\'{i}k already constructed a family of strictly convex $C^2$ curves that have $\sim M^{\frac{2}{3}}$ points in $(\frac{1}{M}\Bbb{Z})^2$ (see e.g. \cite{iosevich2001curvature} for an introduction). As a comparison, our construction seems more flexible in the following sense: In Jarn\'{i}k's construction, the curve is always a perturbation of a parabola \cite{martin2003limiting}, but our construction can be a perturbation of any strictly convex $C^2$ curve. Moreover, this method of construction can also produce points on $C^k$ submanifolds of arbitrary dimension and codimension, see \cref{Ckcounting} below.

     Jarn\'{i}k's construction was refined by Plagne \cite{plagne1999uniform} (see also \cite{iosevich2007mean}), who constructed a fixed strictly convex $C^2$ curve in $R^2$ that contains $\approx M^{\frac{2}{3}}$ points in $\p{\frac{1}{M}\Z}^2$ for a sequence of $M$’s going to infinity. By imitating the proof of \cref{lemma-technical}, we can obtain a similar refinement in all higher dimensions (\cref{latticethm} below). 
     
     \end{remark}
\begin{theorem}[Generalization of Theorem 1 in \cite{plagne1999uniform}]\label{latticethm}
   Let $\chi: \mathbb{Z}^+ \to \mathbb{R}^+$ be any function tending to infinity at $\infty$.  For any dimension $n\geq2$, any $C^2$ hypersurface $\Sigma$  and any $\eps>0$, there is an $\eps$-small $C^2$-perturbation $\Sigma '$  of $\Sigma$ and a strictly increasing  positive integer sequence $\{q_M\}_{M\geq 1}$ (depending on $n, \eps, \Sigma$), such that\[\abs{\Sigma' \cap \p{\frac{1}{q_M}\mathbb{Z}}^n}\gtrsim \frac{q_M^{n-2+\frac{2}{n+1}}}{\chi (q_M)}.\]
    \end{theorem}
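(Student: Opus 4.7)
The plan is to imitate the proof of \cref{lemma-technical}: construct the single curve $\Sigma'$ by iteratively perturbing $\Sigma$ on a sequence of disjoint, shrinking patches, the $M$-th of which is tuned to produce many lattice points at scale $q_M$.

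Concretely, I would pick $q_1<q_2<\cdots$ inductively: given $q_1,\ldots,q_{M-1}$, choose $q_M$ to be a very large integer (say $q_M\geq 2^{q_{M-1}}$) with $\chi(q_M)\geq 2^M/\eps$. This is possible since $\chi\to\infty$. Then select disjoint open patches $P_M\subset\Sigma$, each parameterized as a graph over an $(n-1)$-cube of side length
\[\delta_M:=\chi(q_M)^{-(n+1)/(n^2-n)}.\]
The patches can be made disjoint because the rapid growth of $\chi(q_M)$ makes $\sum_M\delta_M^{n-1}$ summable and much less than the $(n-1)$-volume of $\Sigma$.

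On each $P_M$, the plan is to rescale it to a standard unit patch, apply \cref{sharplatticethm} at scale $R:=q_M\delta_M$, and rescale back. This produces a $C^2$ perturbation $\eta_M$ supported on $P_M$ such that the perturbed graph contains
\[\gtrsim(q_M\delta_M)^{n-2+\frac{2}{n+1}} \ \geq \ \frac{q_M^{n-2+\frac{2}{n+1}}}{\chi(q_M)}\]
points of $\left(\tfrac{1}{q_M}\Z\right)^n$ inside $P_M$; the last inequality follows from the definition of $\delta_M$. Setting $\Sigma'=\Sigma+\sum_M\eta_M$ (with smooth cutoffs so each $\eta_M$ vanishes to high order at the boundary of its patch), the disjointness makes the sum well-defined, and since $\|\eta_M\|_{C^1}\sim(q_M\delta_M)^{-n/(n+1)}\to 0$, taking $q_M$ sufficiently large ensures that the total perturbation is $\eps$-small (in the same sense as in \cref{lemma-technical}). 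The sequence $\{q_M\}$ then serves as the one asserted by the statement.

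The hard part will be verifying that $\Sigma'$ is genuinely $C^2$ across accumulation points of $\{P_M\}$, since each $\eta_M$ has $C^2$-norm of order $\delta_M^{-1}$ on its patch and this blows up as $M\to\infty$. As in the proof of \cref{lemma-technical}, the resolution uses the compactly supported nature of each $\eta_M$, smooth cutoffs at the patch boundaries, and the rapid decrease of $\delta_M$ together with the rapid growth of $q_M$, so that $\Sigma'$ agrees with $\Sigma$ outside $\bigcup_M P_M$ and locally with $\Sigma+\eta_M$ on each $P_M$, giving a $C^2$ perturbation of $\Sigma$ in the sense used throughout the paper.
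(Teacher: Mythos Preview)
Your overall plan—disjoint shrinking patches $P_M$, each carrying a perturbation tuned to the scale $q_M$—is exactly what ``imitating \cref{lemma-technical}'' means, and this part is right. The gap is in the handling of the $C^2$ norm.

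You correctly observe that, after your ambient dilation by $\delta_M^{-1}$ and back, the perturbation $\eta_M$ has $C^2$-norm $\sim \delta_M^{-1}$. But your proposed resolution is backwards: ``rapid decrease of $\delta_M$'' makes $\delta_M^{-1}$ blow up faster, not slower, so at any accumulation point of the patches $\Sigma'$ fails to be $C^2$. The appeal to \cref{lemma-technical} does not help here, because that lemma never dilates the ambient space—it only reparametrizes the domain cube to $[0,1]^m$, leaving $\R^n$ (and hence the second fundamental form of the perturbation) untouched.

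The way the paper's argument actually runs is to drop the ambient rescaling altogether. Apply \cref{sharplatticethm} \emph{directly} to the small patch $P_M\subset\R^n$ at scale $R=q_M$. Because $P_M$ has $(n-1)$-volume $\sim\delta_M^{n-1}$, the proof of \cref{sharplatticethm} only produces $\gtrsim \delta_M^{n-1}\,q_M^{\,n-2+2/(n+1)}$ lattice points (not $(q_M\delta_M)^{n-2+2/(n+1)}$ as you wrote), but the local bump perturbations now live at the native scale $q_M^{-n/(n+1)}$ and have $C^2$-norm $O(1)$, uniformly in $M$; using the $\eps$-freedom in \cref{sharplatticethm} one can even make it $\le 2^{-M}$. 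Since $q_M$ is chosen last, take it large enough that $\chi(q_M)\ge \delta_M^{-(n-1)}\cdot 2^M$; then $\delta_M^{n-1}q_M^{\,n-2+2/(n+1)}\ge q_M^{\,n-2+2/(n+1)}/\chi(q_M)$, and the total perturbation is genuinely $C^2$ (indeed $C^2$-small) everywhere, including at accumulation points. Your specific choice $\delta_M=\chi(q_M)^{-(n+1)/(n(n-1))}$ is an artifact of the unnecessary rescaling and should be replaced by, say, $\delta_M=4^{-M}$ as in \cref{lemma-technical}.
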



By repeating the proof of \cref{sharplatticethm} and \cref{latticethm}, we have the following generalizations to $C^k$ submanifolds:

\begin{theorem}[Generalization of \cref{sharplatticethm}]\label{Ckcounting}
    For any dimensions $n\geq2, m<n$, any $C^k (k \geq 2)$ compact $m$-dimensional submanifold $\Sigma\subset \Bbb{R}^n$,  any $\eps>0$ and any  scale $R \gtrsim_{\Sigma, \eps} 1$, there is an $\eps$-small $C^k$-perturbation $\Sigma_R '$ of $\Sigma$  that contains $\gtrsim_{\Sigma, \eps} R^{\frac{mn}{m+k(n-m)}}$ points in the lattice $\p{\frac1R\Z}^n$.
\end{theorem}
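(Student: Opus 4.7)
The plan is to replicate the strategy of Theorem \ref{sharplatticethm}, adjusting exponents to accommodate submanifolds of dimension $m$ and smoothness $C^k$. The key scale is
\[
    \beta := \frac{n}{m + k(n-m)},
\]
calibrated so that an $m$-dimensional cap of $\Sigma$ of diameter $\sim R^{-\beta}$ fits inside a slab with $m$ tangential sidelengths $\sim R^{-\beta}$ and $n-m$ normal sidelengths $\sim R^{-\beta k}$; the latter is the remainder term in the $C^k$ Taylor expansion of a local parameterization. Such a slab has volume $R^{-\beta(m + k(n-m))} = R^{-n}$, so after dilation by $R$ it becomes unit-sized and Theorem \ref{thm-convex-density} applies against $\Z^n$. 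The number of caps needed to cover $\Sigma$ is $\sim R^{m\beta} = R^{\frac{mn}{m + k(n-m)}}$, matching the target count.

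The argument proceeds in four steps. First, partition $\Sigma$ into $\sim R^{m\beta}$ caps $\Sigma_\theta$ of diameter $\sim R^{-\beta}$ and enclose each in a slab $S_\theta$ as above; after a preliminary small $C^k$ perturbation one may assume the slab normal directions across $\theta$ are well-separated (this plays the role of the Gauss-map genericity used in the hypersurface case). Second, call a cap \emph{good} if the origin-translate $S_\theta'$, after dilation by $R$, meets $\Z^n$ in at most $K$ points. Applying Theorem \ref{thm-convex-density} to a uniformly random rotation $g\in SO(n)$ (composed with an $O(\eps)$ random translation), for a sufficiently large constant $K=K(n)$ the probability that a fixed slab is bad is $\lesssim K^{-n}$, so by Markov at least half of the caps are good with positive probability.

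Third, by linearity of expectation, the expected number of points of $(\tfrac{1}{R}\Z)^n$ lying inside $\bigcup_\theta S_\theta$ under the same random rigid motion is $\sim R^n\cdot\sum_\theta|S_\theta|\sim R^n\cdot R^{-\beta k(n-m)}=R^{m\beta}$, by the choice of $\beta$. Intersecting this with the event from the previous step, one may fix a rigid motion for which $\gtrsim R^{m\beta}$ lattice points lie in the union of good slabs. Fourth, on each good cap carrying at least one such lattice point, perturb $\Sigma$ locally in the normal directions by a bump of amplitude $\lesssim R^{-\beta k}$ supported at spatial scale $R^{-\beta}$, chosen so that the perturbed surface passes through that lattice point. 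The local perturbation has $j$-th derivatives of order $R^{-\beta k}\cdot R^{\beta j}=R^{\beta(j-k)}\leq1$ for $j\leq k$, with $C^{k-1}$-norm tending to $0$ as $R\to\infty$. Summing these disjointly supported local perturbations and undoing the rigid motion yields a global $C^k$-perturbation $\Sigma_R'$ which, for $R\gtrsim_{\Sigma,\eps}1$, is $\eps$-close to $\Sigma$ in $C^k$ and contains $\gtrsim R^{\frac{mn}{m+k(n-m)}}$ lattice points.

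The principal obstacle is the probabilistic bookkeeping already encountered in Theorem \ref{sharplatticethm}: a single realization of the random rigid motion must simultaneously guarantee (i) that most slabs are good in the sense of Theorem \ref{thm-convex-density} and (ii) that the first-moment lower bound on lattice incidences with the $R^{-\beta k}$-neighborhood of $\Sigma$ is attained. This juggling is essentially unchanged after replacing the hypersurface exponent with $\beta=\frac{n}{m+k(n-m)}$; the only new ingredient is tracking $\beta$ through the Taylor remainder, the slab-volume computation, and the derivative-norm bound, which are routine.
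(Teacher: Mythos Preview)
Your proposal is correct and follows the same template the paper intends: choose $\beta=\frac{n}{m+k(n-m)}$ so that each $R^{-\beta}$-cap sits in a slab of volume $R^{-n}$, invoke \cref{thm-convex-density} to control bad slabs, pick a favourable rigid motion, and perturb locally by a bump of amplitude $R^{-\beta k}$. This is exactly the generalization the paper has in mind when it says ``repeat the proof of \cref{sharplatticethm}''.

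Two small points of cleanup. First, the preliminary perturbation to separate normal directions is unnecessary once you use a random rotation; that step in the paper's \cref{sharplatticethm} is a hypersurface-specific shortcut (the Gauss map can cover $S^{n-1}$ when $m=n-1$) which lets one skip the random rotation, and it does \emph{not} generalize to $m<n-1$ since the tangent map then lands in a submanifold of $\mathrm{Gr}(m,n)$ of dimension $m<m(n-m)$. Your random rotation already handles this, so you can drop the genericity remark. Second, the phrase ``intersecting this with the event from the previous step'' is not quite a proof: knowing $\mathbb E[X]\sim R^{m\beta}$ and $\mathbb P[A]\geq 0.99$ does not by itself give a realization with both $X\gtrsim R^{m\beta}$ and $A$. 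The clean fix, mirroring the paper, is to decouple: first pick the rotation so that at least half the slabs are good (this fixes good/bad once and for all), and \emph{then} average over translations to find one placing $\gtrsim R^{m\beta}$ lattice points in the union of good slabs; alternatively, bound directly the expected number of lattice points landing in \emph{bad} slabs by $\sum_\theta\mathbb E\bigl[|S_\theta\cap L|\cdot 1_{\theta\text{ bad}}\bigr]\lesssim|\Theta|K^{1-n}$ using the tail estimate of \cref{thm-convex-density}, and subtract.
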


\begin{theorem}[Generalization of \cref{latticethm}]\label{Cklatticethm}
    Let $\chi: \mathbb{Z}^+ \to \mathbb{R}^+$ be any function tending to infinity at $\infty$.  For any dimensions $n\geq2, m<n$, any $C^k (k \geq 2)$ compact $m$-dimensional submanifold $\Sigma\subset \Bbb{R}^n$,  any $\eps>0$ and any  scale $R \gtrsim_{\Sigma, \eps} 1$, there is an $\eps$-small $C^k$-perturbation $\Sigma '$ of $\Sigma$ and a strictly increasing  positive integer sequence $\{q_M\}_{M\geq 1}$ (depending on $k, n, \eps, \Sigma$), such that\[|\Sigma' \cap (\frac{1}{q_M}\mathbb{Z})^n|\gtrsim \frac{q_M^{\frac{mn}{m+k(n-m)}}}{\chi (q_M)}.\]
    \end{theorem}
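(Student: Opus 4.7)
The plan is to combine local applications of \cref{Ckcounting} on many pairwise disjoint small patches of $\Sigma$, one patch per target scale $q_M$, following the multi-scale scheme used in the proof of \cref{lemma-technical}. Set $\alpha := \frac{mn}{m+k(n-m)}$. Using that $\chi\to\infty$, I first pick a strictly increasing integer sequence $\{q_M\}_{M\geq 1}$ so that $\chi(q_M)\geq 2^M$ and $q_M\,\chi(q_M)^{-1/\alpha}\to\infty$; both conditions can be ensured simultaneously because $\chi$ is unbounded. Then I fix pairwise disjoint patches $P_M\subset\Sigma$ of diameter $\rho_M:=\chi(q_M)^{-1/\alpha}$; since $\sum_M \rho_M^m \leq \sum_M 2^{-Mm/\alpha}<\infty$ and $\Sigma$ is compact, such a disjoint collection (e.g.\ clustering at finitely many base points with diminishing radii) exists.

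On each patch $P_M$, parameterize by $s_0^{(M)}:[0,\rho_M]^m\to\R^n$ and rescale to $\tilde s_0^{(M)}(x):=\rho_M^{-1}s_0^{(M)}(\rho_M x)$ on $[0,1]^m$. Since $\Sigma$ is $C^k$ and $\rho_M\leq 1$, these rescaled parameterizations have uniformly bounded $C^k$-norms. I apply \cref{Ckcounting} to $\tilde s_0^{(M)}$ at scale $\tilde R_M:=q_M\rho_M$, which tends to infinity by construction, obtaining a $C^k$-perturbation $\tilde s^{(M)}$ whose graph contains
\[\gtrsim \tilde R_M^{\alpha}=(q_M\rho_M)^{\alpha}=q_M^{\alpha}/\chi(q_M)\]
points of $(\tilde R_M^{-1}\Z)^n$. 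Undoing the rescaling via $s^{(M)}(y):=\rho_M\,\tilde s^{(M)}(y/\rho_M)$ gives a perturbation $s^{(M)}$ of $s_0^{(M)}$ supported in $P_M$ whose graph contains the same number of points of $\rho_M\cdot(\tilde R_M^{-1}\Z)^n=(q_M^{-1}\Z)^n$. Since the patches $P_M$ are pairwise disjoint and the $s^{(M)}$'s are compactly supported inside their patches, gluing them onto $\Sigma$ yields a compact $C^k$ submanifold $\Sigma'$, and the lattice-point lower bound for each $q_M$ then follows by looking at the single patch $P_M$.

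The main technical point is keeping the global $C^{k-1}$-distance between $\Sigma$ and $\Sigma'$ below $\epsilon$, which I expect to be the hardest part because under rescaling by $\rho_M<1$ the $j$-th derivative of $s^{(M)}$ gains a factor $\rho_M^{1-j}$ over the $j$-th derivative of $\tilde s^{(M)}$. Since \cref{Ckcounting} (via the underlying \cref{thm-counterexample}) allows the $C^{k-1}$-smallness parameter $\tilde\smallscale_M$ of $\tilde s^{(M)}$ to be chosen freely, I take $\tilde\smallscale_M:=\epsilon\cdot 2^{-M}\rho_M^{k-1}$. This forces $\|D^\gamma s^{(M)}\|_\infty\leq \epsilon\cdot 2^{-M}$ for every multi-index $|\gamma|<k$, and summing over $M$ gives the total $C^{k-1}$-distance $\leq\epsilon$, which is the intended notion of $\epsilon$-small $C^k$-perturbation (the top-order $k$-th derivative only needs to stay bounded by a constant depending on $\Sigma,n,m,k$, matching the convention in \cref{sharplatticethm} and \cref{latticethm}). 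Finally, I need $\tilde R_M\gtrsim_{\tilde s_0^{(M)},\tilde\smallscale_M}1$ to invoke \cref{Ckcounting}, which is ensured for large $M$ since the $\tilde s_0^{(M)}$'s have uniformly bounded $C^k$-norms and $q_M$ can be taken as large as needed relative to $\rho_M^{-(k-1)}$.
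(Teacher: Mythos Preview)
Your proposal is correct and follows essentially the same approach as the paper: the paper proves \cref{Cklatticethm} simply by ``repeating the proof of \cref{sharplatticethm} and \cref{latticethm}'', and \cref{latticethm} in turn is proved ``by imitating the proof of \cref{lemma-technical}'', i.e.\ by picking pairwise disjoint shrinking patches on $\Sigma$, rescaling each to unit size, applying the single-scale result (\cref{Ckcounting}), and rescaling back. Your choice $\rho_M=\chi(q_M)^{-1/\alpha}$ is a slightly more direct bookkeeping than the paper's fixed geometric patch sizes $4^{-j}$, but the argument is the same.
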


In terms of smooth hypersurfaces, Schmidt \cite{schmidt1986integer} has the following conjecture:

\begin{conjecture}\label{Schmidtconj}
    Let $n\geq3$ be a dimension and let $\Sigma\subset\R^n$ be a compact $C^\infty$ hypersurface. Then, for any $R>1$, $\Sigma$ contains at most $\sim_\Sigma R^{n-2}$ points in $\p{\frac1R\Z}^n$.
\end{conjecture}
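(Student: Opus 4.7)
This is Schmidt's conjecture, which is open, so any plan must be speculative. My approach would attempt to improve the $C^2$ Andrews bound $R^{n-2+\frac{2}{n+1}}$ by iteratively exploiting the $C^\infty$ hypothesis through increasingly high-order Taylor approximations combined with a determinant-method refinement of Bombieri-Pila type. Heuristically, the $R^{\frac{2}{n+1}}$ overshoot in the $C^2$ case is forced by quadratic rigidity of caps, and higher smoothness should allow progressively better polynomial approximations that eventually eliminate the overshoot.

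First, for a parameter $k = k(R)$ growing slowly with $R$, I would partition $\Sigma$ into caps $\Sigma_\theta$ of diameter $\delta$, where $\delta$ will be optimized. Using the order-$k$ Taylor expansion of a local defining function of $\Sigma$, each $\Sigma_\theta$ is contained in the $O(\delta^{k+1})$-neighborhood of a real algebraic hypersurface $V_\theta$ of degree $\leq k$ (with coefficient bounds controlled by the $C^{k+1}$-norm of $\Sigma$, which is finite by $C^\infty$ smoothness). So every point of $\Sigma \cap \p{\frac{1}{R}\Z}^n$ lying in $\Sigma_\theta$ is contained in $\mathcal{N}_{\delta^{k+1}}(V_\theta) \cap \p{\frac{1}{R}\Z}^n$. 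The target is to bound this per-cap count by $\lesssim (\delta R)^{n-2} R^{o(1)}$ and then sum the $\delta^{-(n-1)}$ contributions.

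Second, I would invoke a determinant-method argument in the style of Bombieri--Pila and its higher-dimensional extensions by Heath-Brown, Pila, and Marmon: any cluster of near-lattice points on $V_\theta$ must itself lie on a proper algebraic subvariety $W_\theta \subset V_\theta$ of controlled degree, on which one inducts on dimension. For curves ($n=2$) the induction terminates cleanly because Bombieri--Pila gives the sharp exponent on degree-$k$ algebraic curves, recovering the target bound. In general dimension, one would hope that iterating through the dimension tower and sending $k \to \infty$ produces a bound whose exponent converges to $n-2$, with the $C^\infty$ hypothesis licensing the choice $k = k(R) \to \infty$.

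The main obstacle is the higher-dimensional determinant step. For $n \geq 3$, the known quantitative bounds on near-integer points on algebraic hypersurfaces of growing degree are not strong enough to close the gap between $R^{n-2+\frac{2}{n+1}}$ and $R^{n-2}$ uniformly, and the implied constants in the determinant method degrade as the degree $k$ grows, so the limit $k \to \infty$ has to be taken with great care. A second serious difficulty is that a $C^\infty$ hypersurface may genuinely contain (or $C^k$-approximate to very high order) low-dimensional ruled pieces supporting many lattice points; the argument must isolate and absorb these contributions through the inductive substep without losing the target exponent. In fact, the constructions earlier in this paper (\cref{sharplatticethm} and \cref{Ckcounting}) show that the analogous conjecture fails drastically in any fixed $C^k$-class, so the proof must quantitatively exploit the difference between $C^k$ for fixed $k$ and full $C^\infty$, a mechanism not currently available in the literature. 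Overcoming this gap is, I expect, where genuinely new input would be required.
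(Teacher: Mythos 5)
This statement is labeled \emph{Conjecture} (Schmidt's conjecture, \cite{schmidt1986integer}) and the paper offers no proof of it; it is cited as an open problem, and indeed the paper's own \cref{Cklatticethm} is used only to show that any would-be proof cannot work at a fixed finite smoothness level. You correctly recognize this, and your submission is a speculative roadmap rather than a proof, which is the appropriate posture here. There is therefore no paper argument to compare against, and nothing to grade as a gap or a match: what you have written is an informed survey of why the Bombieri--Pila/determinant-method route is the natural candidate attack and where it currently breaks down (the degradation of implied constants as the approximating degree grows, and the need to handle near-ruled pieces inductively). One small refinement worth making explicit: your closing observation that the argument ``must quantitatively exploit the difference between $C^k$ for fixed $k$ and full $C^\infty$'' is exactly the content of the remark following \cref{Schmidtconj} in the paper, which deduces from \cref{Cklatticethm} that any proof confined to $C^k$ hypersurfaces must take $k\geq 4$ when $n=3$ and $k\geq 3$ when $n\geq 4$; you might sharpen your proposal by stating those explicit thresholds, since they quantify the minimal smoothness input any determinant-method iteration would need to start from.
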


Our \cref{Cklatticethm} implies that if one wishes to prove \cref{Schmidtconj} by working only on $C^k$ hypersurfaces, then it is necessary for them to consider some $k\geq 4$ in dimension $3$, or $k\geq 3$ in dimensions $4$ and higher. 

For further references in the approximation theory of rescaled submanifolds by integer points, we refer the reader to \cite{iosevich2011lattice} and references therein.

\printbibliography
\end{document}